\theoremstyle{plain}
\newtheorem{thm}{Theorem}[section]
\newtheorem{cor}[thm]{Corollary}
\newtheorem{lem}[thm]{Lemma}
\newtheorem{prop}[thm]{Proposition}
\theoremstyle{definition}
\theoremstyle{remark}
\newtheorem{rem}[thm]{Remark}
\numberwithin{equation}{section}
\newcommand{\average}{{\mathchoice {\kern1ex\vcenter{\hrule height.4pt
width 6pt depth0pt} \kern-9.7pt} {\kern1ex\vcenter{\hrule
height.4pt width 4.3pt depth0pt} \kern-7pt} {} {} }}
\def\R{\mathbb{R}}
\begin{document}

\title[{Nonlocal equations in bounded domains: a survey}]{Nonlocal elliptic equations in bounded domains: \\ a survey}

\author{Xavier Ros-Oton}
\address{The University of Texas at Austin, Department of Mathematics, 2515 Speedway, Austin, TX 78751, USA}
\email{ros.oton@math.utexas.edu}

\thanks{The author was supported by grants MTM2011-27739-C04-01 (Spain), and 2009SGR345 (Catalunya)}

\keywords{Integro-differential equations; bounded domains; regularity.}
\subjclass[2010]{47G20; 60G52; 35B65.}

\maketitle

\begin{abstract}
In this paper we survey some results on the Dirichlet problem
\[\left\{ \begin{array}{rcll}
L u &=&f&\textrm{in }\Omega \\
u&=&g&\textrm{in }\R^n\backslash\Omega
\end{array}\right.\]
for nonlocal operators of the form
\[Lu(x)=\textrm{PV}\int_{\R^n}\bigl\{u(x)-u(x+y)\bigr\}K(y)dy.\]
We start from the very basics, proving existence of solutions, maximum principles, and constructing some useful barriers.
Then, we focus on the regularity properties of solutions, both in the interior and on the boundary of the domain.

In order to include some natural operators $L$ in the regularity theory, we do not assume any regularity on the kernels.
This leads to some interesting features that are purely nonlocal, in the sense that have no analogue for local equations.

We hope that this survey will be useful for both novel and more experienced researchers in the field.
\end{abstract}

\vspace{8mm}

\tableofcontents

\newpage

\section{Introduction}

The aim of this paper is to survey some results on Dirichlet problems of the form
\begin{equation}\label{pb}
\left\{ \begin{array}{rcll}
L u &=&f&\textrm{in }\Omega \\
u&=&g&\textrm{in }\R^n\backslash\Omega.
\end{array}\right.\end{equation}
Here, $\Omega$ is any bounded domain in $\R^n$, and $L$ is an elliptic integro-differential operator of the form
\begin{equation}\label{L}
Lu(x)=\textrm{PV}\int_{\R^n}\bigl\{u(x)-u(x+y)\bigr\}K(y)dy.
\end{equation}
The function $K(y)\geq0$ is the kernel of the operator\footnote{The typical example is $K(y)=c_{n,s}|y|^{-n-2s}$, which corresponds to $L=(-\Delta)^s$, the fractional Laplacian.
As $s\uparrow1$, it converges to the Laplacian $-\Delta$.}
\footnote{For equations with $x$-dependence, the kernel is a function of two variables, $K(x,y)$. Here, we assume that the operator $L$ is translation invariant, and thus that the kernel $K$ does not depend on $x$.}, and we assume
\[K(y)=K(-y)\qquad\textrm{and}\qquad \int_{\R^n}\min\bigl\{|y|^2,1\bigr\}K(y)dy<\infty.\]

Integro-differential problems of the form \eqref{pb} arise naturally in the study of stochastic processes with jumps, and have been widely studied both in Probability and in Analysis and PDEs.
We refer to the reader to the following works on:
\begin{itemize}
\item Existence of solutions: Felsinger-Kassmann-Voigt \cite{FKV}, Hoh-Jacob \cite{HJ}, and Barles-Imbert \cite{BI};
\item Interior regularity of solutions: Bass-Levin \cite{Bass2}, Kassmann \cite{K}, Caffarelli-Silvestre \cite{CS,CS2,CS3}, Barles-Chasseigne-Imbert \cite{BCI}, Kassmann-Mimica \cite{KM,KM2}, Schwab-Silvestre \cite{SS}, and Serra \cite{Se2};
\item Boundary regularity of solutions: Bogdan \cite{Bogdan}, the author and Serra \cite{RS-Dir,RS-K,RS-stable}, Grubb \cite{Grubb,Grubb2}, Chen-Song \cite{Potential2}, Barles-Chasseigne-Imbert \cite{BCI2}, Bogdan-Grzywny-Ryznar \cite{BGR2}, and Bogdan-Kumagai-Kwa\'snicki \cite{BKK};
\item Other qualitative properties of solutions: Birkner-L\'opez-Wakolbinger \cite{BMW}, Dipierro-Savin-Valdinoci \cite{DSV}, Abatangelo \cite{Abatangelo}, and Kulczycki \cite{Kulcz}.
\end{itemize}

\vspace{1mm}

In applications, this type of nonlocal problems appear in models involving ``anomalous diffusions'' (in which the underlying stochastic process is not given by Brownian motion), or in the presence of long-range interactions or forces.
In particular, they appear in Physics \cite{DGLZ,ZD,phys3,phys2,Eringen,Laskin}, Finance \cite{Schoutens,finance3,AB}, Fluid dynamics \cite{DG-V,fluids}, Ecology \cite{MV,nature,nature2}, or Image processing \cite{image3}.

\vspace{3mm}

The aim of this paper is to survey some results on existence, regularity, and qualitative properties of solutions to \eqref{pb}.
More precisely, we will start from the very basics (showing existence and boundedness of solutions), to then focus on the regularity of solutions, both in the interior and on the boundary of the domain $\Omega$.

In order to include some natural operators $L$ in the regularity theory, we do \emph{not} assume any regularity on the kernel $K(y)$.
As we will see, there is an interesting relation between the regularity properties of solutions and the regularity of the kernels $K(y)$.
This is a purely nonlocal issue, in the sense that has no analogue in second order equations, as explained next.

For linear elliptic second order equations of the form
\begin{equation}\label{pb-local}
\left\{ \begin{array}{rcll}
-\sum_{ij} a_{ij}\partial_{ij}u &=&f&\textrm{in }\Omega \\
u&=&g&\textrm{on }\partial\Omega,
\end{array}\right.\end{equation}
the interior regularity properties of $u$ depend only on the regularity of $f$.
This is because, after an affine change of variables, this equation is just $-\Delta u=f$.

The nonlocal analogue of \eqref{pb-local} is \eqref{pb}.
For this problem, the interior regularity of solutions depends on the regularity of $f$ ---as in \eqref{pb-local}---, but it also depends on the regularity of $K(y)$ in the $y$-variable.
Furthermore, if the kernel $K$ is not regular, then the interior regularity of $u$ will in addition depend on the regularity of $g$, on the boundary regularity of $u$, and even on the shape of $\Omega$, as we will see later on.

Hence, for nonlocal equations, the class of linear and translation invariant operators is much richer, and already presents several interesting features.
The same type of issues appear when considering nonlinear equations, or operators with $x$-dependence.
However, for the clarity of presentation, we will consider only linear and translation invariant equations.

In most of the regularity results we will focus on two classes of kernels: we will assume $s\in(0,1)$ and either that
\begin{equation}\label{rough}
\frac{\lambda}{|y|^{n+2s}}\leq K(y)\leq \frac{\Lambda}{|y|^{n+2s}},\qquad 0<\lambda \leq \Lambda;
\end{equation}
or that
\begin{equation}\label{stable}
\hspace{25mm} K(y)=\frac{a\left(y/|y|\right)}{|y|^{n+2s}},\qquad\qquad a\in L^1(S^{n-1}),\quad a\geq0.
\end{equation}
In fact, in order to include operators like $L=(-\partial_{x_1x_1}^2)^s+\cdots+(-\partial_{x_nx_n}^2)^s$, we will allow $a$ in \eqref{stable} to be any nonnegative measure on $S^{n-1}$.
The only necessary condition on the measure $a$ is that it is not supported in a hyperplane ---so that the operator $L$ is not degenerate.
A quantitative way to state this nondegeneracy condition is that, for some positive constants $\lambda$ and $\Lambda$, one has $\int_{S^{n-1}}da\leq \Lambda$ and $\int_{S^{n-1}}|\nu\cdot\theta|^{2s}da(\theta)\geq \lambda$ for all $\nu\in S^{n-1}$; see \cite{RS-stable}.

\vspace{2mm}

The paper is organized as follows.
In Section~\ref{kernels} we briefly explain the probabilistic interpretation of \eqref{pb}.
In Section~\ref{existence} we show existence of weak solutions.
In Section~\ref{max-princ} we prove the maximum principle for such solutions.
In Section~\ref{barriers} we construct some useful barriers and give an $L^\infty$ estimate.
In Section~\ref{interior} we discuss the interior regularity properties for the classes of kernels \eqref{rough} and \eqref{stable}.
In Section~\ref{boundary} we see what is the boundary regularity of solutions.
Finally, in Section~\ref{further-interior} we come back to the interior regularity of solutions to \eqref{pb}.

\section{Motivation and some preliminaries}
\label{kernels}

\addtocontents{toc}{\protect\setcounter{tocdepth}{1}}  
\subsection{L\'evy processes}

Integro-differential equations of the form \eqref{pb} arise naturally in the study of stochastic processes with jumps, and more precisely in L\'evy processes.
A L\'evy process is a stochastic process with independent and stationary increments.
Informally speaking, it represents the random motion of a particle whose successive displacements are independent and statistically identical over different time intervals of the same length.

These processes extend the concept of Brownian motion, and were introduced a few years after Wiener gave the precise definition of the Brownian motion \cite{Levy}.
Essentially, L\'evy processes are obtained when one relaxes the assumption of continuity of paths (which gives the Brownian motion) by the weaker assumption of stochastic continuity.

By the L\'evy-Khintchine Formula, the infinitesimal generator of any L\'evy processes is an operator of the form\footnote{We denote the infinitesimal generator $-L$ in order to be consistent with our notation in \eqref{L}. We always take $L$ to be positive definite, so it is the analogue of $-\Delta$.}
\[-Lu(x)=\sum_{i,j} a_{ij}\partial_{ij}u+\sum_j b_j\partial_j u+\int_{\R^n}\bigl\{u(x+y)-u(x)-y\cdot\nabla u(x)\chi_{B_1}(y)\bigr\}d\nu(y),\]
where $\nu$ is the L\'evy measure, and satisfies $\int_{\R^n}\min\bigl\{1,|y|^2\bigr\}d\nu(y)<\infty$.
When the process has no diffusion or drift part, this operator takes the form
\[-Lu(x)=\int_{\R^n}\bigl\{u(x+y)-u(x)-y\cdot\nabla u(x)\chi_{B_1}(y)\bigr\}d\nu(y).\]
Furthermore, if one assumes the process to be symmetric, and the L\'evy measure to be absolutely continuous, then $L$ can be written as \eqref{L} or, equivalently,
\[Lu(x)=\frac12\int_{\R^n}\bigl\{2u(x)-u(x+y)-u(x-y)\bigr\}K(y)dy,\]
with $K(y)=K(-y)$.
We will use this last expression for $L$ throughout the paper.

As an example, let $\Omega\subset\R^n$ be any bounded domain, and let us consider a L\'evy process $X_t$, $t\geq0$, starting at $x\in\Omega$.
Let $u(x)$ be the expected first exit time, i.e., the expected time $\mathbb E[\tau]$, where $\tau=\inf\{t>0\,:\,X_t\notin\Omega\}$ is the first time at which the particle exits the domain.
Then, $u(x)$ solves
\[\left\{ \begin{array}{rcll}
L u &=&1&\textrm{in }\Omega \\
u&=&0&\textrm{in }\R^n\backslash\Omega,\end{array}\right.\]
where $-L$ is the infinitesimal generator of $X_t$.

Recall that, when $X_t$ is a Brownian motion, then $L$ is the Laplace operator $-\Delta$.
In the context of integro-differential equations, L\'evy processes plays the same role that Brownian motion play in the theory of second order equations.

Another simple example is given by the following.
Let us also consider a bounded domain $\Omega$, a process $X_t$ starting at $x\in \Omega$, and the first time $\tau$ at which the particle exits the domain.
Assume now that we have a payoff function $g\,:\,\R^n\setminus\Omega\longrightarrow\R$, so that when the process $X_t$ exits $\Omega$ we get a payoff $g(X_\tau)$.
Then, the expected payoff $u(x):=\mathbb E[g(X_\tau)]$ solves the problem
\[\left\{ \begin{array}{rcll}
L u &=&0&\textrm{in }\Omega \\
u&=&g&\textrm{in }\R^n\backslash\Omega.\end{array}\right.\]

The Dirichlet problem \eqref{pb} arise when considering at the same time a running cost $f$ and a final payoff $g$.

\subsection{Kernels with compact support}

It is important to remark that when the kernel $K(y)$ has compact support in a ball $B_\delta$ (for some $\delta>0$), then the Dirichlet problem is
\[\left\{ \begin{array}{rcll}
L u &=&f&\textrm{in }\Omega \\
u&=&g&\textrm{in }(\Omega+B_\delta)\backslash\Omega.\end{array}\right.\]
This means that the exterior condition $g$ has to be posed only in a neighborhood of $\partial\Omega$ and not in the whole $\R^n\backslash\Omega$.
This turns out to be convenient in some applications; see for example \cite{DGLZ,Silling,ZD,CPQ}.

However, from the analytical point of view, it is only the singularity of the kernel $K(y)$ at the origin that plays a role in the regularity properties of solutions; see for example Section 14 in \cite{CS}.

\subsection{Stable processes}

A special class of L\'evy processes are the so-called stable processes.
These are the processes which satisfy self-similarity properties, and they are also the ones appearing in the Generalized Central Limit Theorem; see \cite{ST}.

The infinitesimal generators of these processes are given by \eqref{L}-\eqref{stable}.
Note that the structural condition \eqref{stable} on the kernel $K$ is equivalent to saying that the L\'evy measure is \emph{homogeneous}.
This is also equivalent to the fact that the operator $L$ is scale invariant.

A very natural stable process is the radially symmetric one.
This means that $K(y)=c|y|^{-n-2s}$ and hence, up to a multiplicative constant,
\begin{equation}\label{frac-lap}
L=(-\Delta)^s.
\end{equation}

Another very natural stable process is the one obtained by taking independent stable processes in each coordinate.
That is, we consider $X_t=(X_t^{(1)},...,X_t^{(n)})$, where $X_t^{(i)}$ are 1-dimensional i.i.d. symmetric stable processes.
The generator of this process $X_t$ will be
\begin{equation}\label{frac-der}
L=(-\partial_{x_1x_1}^2)^s+\cdots+(-\partial_{x_nx_n}^2)^s,
\end{equation}
and it corresponds to \eqref{stable} with the measure $a$ being $2n$ delta functions on $S^{n-1}$.
For example, when $n=2$, one has $a=\delta_{(0,1)}+\delta_{(0,-1)}+\delta_{(1,0)}+\delta_{(-1,0)}$.

While the operators \eqref{frac-lap} and \eqref{frac-der} may look similar (they are the same when $s=1$), they have quite different regularity properties, as we will see in the next sections.
For example, while solutions to $(-\Delta)^su=0$ in $\Omega$ are $C^\infty$ inside the domain, this may not be the case for \eqref{frac-der}.

Note also that the Fourier symbols of \eqref{frac-lap} and \eqref{frac-der} are $|\xi|^{2s}$ and $|\xi_1|^{2s}+\cdots+|\xi_n|^{2s}$, respectively.
Again, the first one is $C^\infty$ outside the origin, while the second one is just $C^{2s}$.
In general, the Fourier symbol of any stable process of the form \eqref{stable} is
\begin{equation}\label{symbol-stable}
A(\xi)=c\int_{S^{n-1}}|\xi\cdot\theta|^{2s}a(\theta)d\theta.
\end{equation}
The symbol $A(\xi)$ is in general only $C^{2s}$ outside the origin, but it would be $C^\infty$ outside the origin whenever the function $a$ is $C^\infty$ on $S^{n-1}$.

\section{Existence of solutions}
\label{existence}

In this Section we explain briefly how to prove existence of weak solutions to \eqref{pb} in a simple case.
We hope that this will be useful for students and/or for readers which are not familiar with nonlocal operators.
The more experienced reader should go to Felsinger-Kassmann-Voigt \cite{FKV}, where this is done in a much more general setting.

\vspace{2mm}

As we will see, the existence (and uniqueness) of weak solutions to \eqref{pb}-\eqref{L} follows from the Riesz representation theorem once one has the appropriate ingredients.

The energy functional associated to the problem \eqref{pb} is
\begin{equation}\label{functional}
\mathcal E(u)=\frac14\int\int_{\R^{2n}\setminus(\Omega^c\times\Omega^c)}\bigl(u(x)-u(z)\bigr)^2K(z-x)dx\,dz-\int_\Omega fu.
\end{equation}
The minimizer of $\mathcal E$ among all functions with $u=g$ in $\R^n\setminus\Omega$ will be the unique weak solution of \eqref{pb}.

Notice that $\mathcal E(u)$ is defined for all regular enough functions $u$ which are bounded at infinity.
When $g\equiv0$, then making the change of variables $y=x+z$ the functional can be written as
\begin{equation}\label{functional2}
\mathcal E(u)=\frac14\int_{\R^n}\int_{\R^n}\bigl(u(x)-u(x+y)\bigr)^2K(y)dx\,dy-\int_\Omega fu.
\end{equation}
When $g$ is not zero, the term $\int\int_{\Omega^c\times\Omega^c}|g(x)-g(z)|^2K(z-x)dx\,dz$ could be infinite, and this is why in general one has to take \eqref{functional}.

For simplicity, we will show existence of solutions for the case $g\equiv0$, and hence we can think on the energy functional \eqref{functional2}.

Let $H_K(\R^n)$ be the space of functions $u\in L^2(\R^n)$ satisfying
\begin{equation}\label{seminorm-K}
[u]_{H_K}^2:=\frac12\int_{\R^n}\int_{\R^n}\bigl(u(x)-u(x+y)\bigr)^2K(y)dx\,dy<\infty,
\end{equation}
and let
\[X=\left\{u\in H_K(\R^n)\,:\, u\equiv0\quad \textrm{in}\ \R^n\setminus\Omega\right\}.\]
Essentially, the only assumption which is needed in order to prove existence of solutions is the Poincar\'e inequality
\begin{equation}\label{Poincare}
\int_\Omega u^2\leq C\int_{\R^n}\int_{\R^n}\bigl|u(x)-u(x+y)\bigr|^2K(y)dx\,dy
\end{equation}
for functions $u\equiv0$ in $\R^n\setminus\Omega$.

When the kernel $K$ satisfies \eqref{rough}, then the seminorm in \eqref{seminorm-K} is equivalent to
\begin{equation}\label{seminorm-H^s}
[u]_{H^s}^2:=\frac12\int_{\R^n}\int_{\R^n}\frac{\bigl|u(x)-u(x+y)\bigr|^2}{|y|^{n+2s}}\,dx\,dy.
\end{equation}
In this case, the Poincar\'e inequality \eqref{Poincare} follows easily from the fractional Sobolev inequality\footnote{Different proofs of the fractional Sobolev inequality can be found in \cite{Stein}, \cite{Ponce}, and \cite{DPV}.
The proof given in \cite{Ponce}, which is due to Brezis, is really nice and simple.}
 in $\R^n$ and H\"older's inequality in $\Omega$.

Once one has the Poincar\'e inequality \eqref{Poincare}, then it follows that the space $X$ is a Hilbert space with the scalar product
\[(v,w)_K:=\frac12\int_{\R^n}\int_{\R^n}\bigl(v(x)-v(x+y)\bigr)\bigl(w(x)-w(x+y)\bigr)K(y)dx\,dy.\]
Then, the weak formulation of \eqref{pb} is just
\begin{equation}\label{weak}
(u,\varphi)_K=\int_\Omega f\varphi\qquad \textrm{for all}\ \varphi\in X,
\end{equation}
and the existence and uniqueness of weak solution follows immediately from the Riesz representation theorem.

\vspace{2mm}

For more general classes of kernels $K(y)$, one has to show the Poincar\'e inequality \eqref{Poincare} (see Lemma 2.7 in \cite{FKV}), and then the existence of solutions follows by the same argument above.


\vspace{2mm}

Finally, notice that when $u$ is regular enough then for all $\varphi\in X$ we have
\[\begin{split}
(u,\varphi)&=\frac12\int_{\R^n}\int_{\R^n}\bigl(u(x)-u(z)\bigr)\bigl(\varphi(x)-\varphi(z)\bigr)K(x-z)dx\,dz\\
&=\frac12\,\textrm{PV}\int_{\R^n}\int_{\R^n}\bigl(u(x)-u(z)\bigr)\varphi(x)K(x-z)dx\,dz+\\
&\qquad\qquad\qquad\qquad\qquad\qquad  +\frac12\,\textrm{PV}\int_{\R^n}\int_{\R^n}\bigl(u(z)-u(x)\bigr)\varphi(z)K(x-z)dx\,dz\\
&=\frac12\int_{\R^n}Lu(x)\,\varphi(x)dx+\frac12\int_{\R^n}Lu(z)\varphi(z)dz\\
&=\int_\Omega Lu\,\varphi,
\end{split}\]
where we used that $K(y)=K(-y)$ and that $\varphi\equiv0$ in $\R^n\setminus\Omega$.

This means that, when $u$ is regular enough, the weak formulation \eqref{weak} reads as
\[\int_\Omega Lu\,\varphi=\int_\Omega f\,\varphi\qquad \textrm{for all}\ \varphi\in X,\]
and thus it is equivalent to $Lu=f$ in $\Omega$.

\begin{rem}
We showed in this Section how to prove the existence of \emph{weak} solutions to \eqref{pb} by variational methods.
On the other hand, an alternative approach is to use Perron's method to establish the existence of \emph{viscosity} solutions; see \cite{BCI2,CS}.
As we will see, when the right hand side $f$ is H\"older continuous then all solutions are classical solutions (in the sense that the operator $L$ can be evaluated pointwise), and thus the notions of weak and viscosity solutions coincide; see \cite{SV} for more details.
\end{rem}

\section{Comparison principle}
\label{max-princ}

In this section we show the maximum principle and the comparison principle for weak solutions to \eqref{pb}.

As in the classical case of the Laplacian $-\Delta$, the maximum principle essentially relies on the fact that $Lu(x_0)\geq 0$ whenever $u$ has a maximum at $x_0$.
In case of local equations, this is true for any local maximum, while in nonlocal equations this is only true when $u$ has a global maximum at $x_0$.
In this case, the inequality $Lu(x_0)\geq0$ follows simply from the expression
\[Lu(x_0)=\frac12\int_{\R^n}\bigl(2u(x_0)-u(x_0+y)-u(x_0-y)\bigr)K(y)dy\]
and the fact that $u(x_0)\geq u(x_0\pm y)$ for any $y\in \R^n$ (which holds whenever $u$ has a global maximum at $x_0$).

Notice also that when $K(y)>0$ in $\R^n$, this argument already yields a strong maximum principle, since one has $Lu(x_0)>0$ unless $u\equiv ctt$.

The previous considerations work when $u$ is regular enough, so that $Lu$ can be evaluated pointwise.
In case of weak solutions $u$ to \eqref{pb}, the proof of the maximum principle goes as follows.
For simplicity, we do it in the case $K(y)>0$, but the same could be done for more general kernels $K(y)\geq0$.

\begin{prop}\label{prop-max-princ}
Let $L$ be any operator of the form \eqref{L}, with $K(y)>0$ in $\R^n$.
Let $u$ be any weak solution to \eqref{pb}, with $f\geq0$ in $\Omega$ and $g\geq0$ in $\R^n\setminus\Omega$.
Then, $u\geq0$ in $\Omega$.
\end{prop}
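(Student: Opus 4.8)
The plan is to argue by contradiction using the test function $\varphi = u^- := \max\{-u,0\}$, which is admissible in the weak formulation \eqref{weak} because $u = g \geq 0$ in $\R^n \setminus \Omega$ forces $u^- \equiv 0$ there, so $u^- \in X$ (one should check that $u^- \in H_K(\R^n)$, which follows from the pointwise inequality $|u^-(x) - u^-(z)| \leq |u(x) - u(z)|$). Plugging $\varphi = u^-$ into \eqref{weak} gives
\[
(u, u^-)_K = \int_\Omega f\, u^- \geq 0,
\]
since $f \geq 0$ and $u^- \geq 0$. So the whole strategy reduces to showing that $(u, u^-)_K \leq 0$, with equality only when $u^- \equiv 0$.

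To handle $(u, u^-)_K$, the key algebraic observation is the pointwise inequality
\[
\bigl(u(x) - u(z)\bigr)\bigl(u^-(x) - u^-(z)\bigr) \leq -\bigl(u^-(x) - u^-(z)\bigr)^2
\]
for all $x,z$. One verifies this by writing $u = u^+ - u^-$ and checking the four sign cases (or more slickly: the map $t \mapsto t^-$ is monotone nonincreasing and $1$-Lipschitz, so $(a-b)(a^- - b^-) \leq -(a^- - b^-)^2$). Multiplying by $K(x-z) \geq 0$ and integrating, and recalling the definition of $(\cdot,\cdot)_K$, yields
\[
(u, u^-)_K \leq -\frac12 \int_{\R^n}\int_{\R^n} \bigl(u^-(x) - u^-(x+y)\bigr)^2 K(y)\, dx\, dy = -[u^-]_{H_K}^2 \leq 0.
\]
Combining this with the lower bound $(u, u^-)_K \geq 0$ from the previous paragraph forces $[u^-]_{H_K}^2 = 0$.

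Finally, $[u^-]_{H_K} = 0$ together with $K > 0$ in $\R^n$ (so the weight never vanishes) forces $u^-(x) = u^-(x+y)$ for a.e. $(x,y)$, hence $u^-$ is a.e. constant; since $u^- \equiv 0$ in $\R^n \setminus \Omega$ and $\Omega$ is bounded, that constant is $0$, so $u^- \equiv 0$, i.e. $u \geq 0$ in $\Omega$. (Alternatively, once $[u^-]_{H_K}^2 = 0$ one may invoke the Poincaré inequality \eqref{Poincare} directly to conclude $\int_\Omega (u^-)^2 = 0$, which avoids discussing the support of $K$.)

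I expect the only mildly delicate point to be the admissibility of $u^-$ as a test function: one must confirm $u^- \in X$, i.e. that $u^-$ inherits finite $H_K$-seminorm and the correct exterior vanishing from $u$. The exterior vanishing is immediate from $g \geq 0$; the seminorm bound is the pointwise domination $|u^-(x)-u^-(z)| \le |u(x)-u(z)|$ integrated against $K$. Everything else is a short computation, and the structure is exactly the weak-formulation analogue of the pointwise argument ($Lu(x_0) \geq 0$ at a global minimum) sketched before the statement.
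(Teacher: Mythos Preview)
Your proof is correct and follows essentially the same approach as the paper: test the weak formulation with $\varphi = u^-$, exploit the algebraic inequality $(u^+(x)-u^+(z))(u^-(x)-u^-(z)) \le 0$ to bound the bilinear form, and conclude $u^- \equiv 0$. The only organizational difference is that the paper splits the integration domain into $\Omega\times\Omega$ and $\Omega\times\Omega^c$ (treating the latter using $g\ge 0$ directly) and argues by contradiction, whereas you apply the pointwise inequality globally and close via $[u^-]_{H_K}=0$ plus Poincar\'e; these are equivalent packagings of the same idea.
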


\begin{proof}
Recall that $u$ is a weak solution of \eqref{pb} if
\begin{equation}\label{max-1}
\int\int_{\R^{2n}\setminus(\Omega^c\times\Omega^c)}\bigl(u(x)-u(z)\bigr)\bigl(\varphi(x)-\varphi(z)\bigr)K(z-x)dx\,dz=\int_\Omega f\varphi
\end{equation}
for all $\varphi\in H_K(\R^n)$ with $\varphi\equiv0$ in $\R^n\setminus\Omega$.

Write $u=u^+-u^-$ in $\Omega$, where $u^+=\max\{u,0\}\chi_\Omega$ and $u^-=\max\{-u,0\}\chi_\Omega$.
We will take $\varphi=u^-$, assume that $u^-$ is not identically zero, and argue by contradiction.

Indeed, since $f\geq0$ and $\varphi\geq0$, then we clearly have that
\begin{equation}\label{max-2}
\int_\Omega f\varphi\geq0.
\end{equation}
On the other hand, we have that
\[\begin{split}
\int\int_{\R^{2n}\setminus(\Omega^c\times\Omega^c)}&\bigl(u(x)-u(z)\bigr)\bigl(\varphi(x)-\varphi(z)\bigr)K(z-x)dx\,dz=\\
&\qquad = \int_\Omega\int_\Omega \bigl(u(x)-u(z)\bigr)\bigl(u^-(x)-u^-(z)\bigr)K(z-x)dx\,dz\,+\\
&\qquad\qquad+2\int_\Omega dx\int_{\Omega^c}\bigl(u(x)-g(z)\bigr)u^-(x)K(z-x)dz.
\end{split}\]
Moreover, $\bigl(u^+(x)-u^+(z)\bigr)\bigl(u^-(x)-u^-(z)\bigr)\leq0$, and thus
\[\begin{split}
&\int_\Omega\int_\Omega \bigl(u(x)-u(z)\bigr)\bigl(u^-(x)-u^-(z)\bigr)K(z-x)dx\,dz\\
&\qquad\qquad \leq -\int_\Omega\int_\Omega \bigl(u^-(x)-u^-(z)\bigr)^2K(z-x)dx\,dz<0.\end{split}\]
Also, since $g\geq0$ then
\[\int_\Omega dx\int_{\Omega^c}\bigl(u(x)-g(z)\bigr)u^-(x)K(z-x)dz\leq 0.\]

Therefore, we have shown that
\[\int\int_{\R^{2n}\setminus(\Omega^c\times\Omega^c)}\bigl(u(x)-u(z)\bigr)\bigl(\varphi(x)-\varphi(z)\bigr)K(z-x)dx\,dz<0,\]
and this contradicts \eqref{max-1}-\eqref{max-2}.
\end{proof}

Of course, once we have the maximum principle, the comparison principle follows immediately.

\begin{cor}\label{comparison}
Let $L$ be any operator of the form \eqref{L}, with $K(y)>0$ in $\R^n$.
Let $u_1$ and $u_2$ be weak solutions to
\[\left\{ \begin{array}{rcll}
L u_1 &=&f_1&\textrm{in }\Omega \\
u_1&=&g_1&\textrm{in }\R^n\backslash\Omega\end{array}\right.
\qquad \textrm{and}\qquad
\left\{ \begin{array}{rcll}
L u_2 &=&f_2&\textrm{in }\Omega \\
u_2&=&g_2&\textrm{in }\R^n\backslash\Omega.\end{array}\right.\]
Assume that $f_1\geq f_2$ and $g_1\geq g_2$.
Then, $u_1\geq u_2$.
\end{cor}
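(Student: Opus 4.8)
The plan is to deduce Corollary~\ref{comparison} from Proposition~\ref{prop-max-princ} by a standard linearity argument. First I would set $w := u_1 - u_2$. Since the operator $L$ in \eqref{L} is linear and the notion of weak solution is linear in the data (the weak formulation \eqref{weak}, equivalently \eqref{max-1}, is bilinear in $(u,\varphi)$ and linear on the right-hand side in $f$), the function $w$ is a weak solution of the Dirichlet problem with right-hand side $f := f_1 - f_2$ in $\Omega$ and exterior datum $g := g_1 - g_2$ in $\R^n\setminus\Omega$. Concretely, subtracting the two identities
\[
\int\!\!\int_{\R^{2n}\setminus(\Omega^c\times\Omega^c)}\!\bigl(u_i(x)-u_i(z)\bigr)\bigl(\varphi(x)-\varphi(z)\bigr)K(z-x)\,dx\,dz=\int_\Omega f_i\varphi
\]
for $i=1,2$ and all admissible test functions $\varphi\in H_K(\R^n)$ with $\varphi\equiv0$ in $\R^n\setminus\Omega$ gives exactly the weak formulation for $w$ with data $(f,g)$.

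Next I would observe that the hypotheses $f_1\geq f_2$ in $\Omega$ and $g_1\geq g_2$ in $\R^n\setminus\Omega$ translate into $f\geq0$ in $\Omega$ and $g\geq0$ in $\R^n\setminus\Omega$. Applying Proposition~\ref{prop-max-princ} to $w$ then yields $w\geq0$ in $\Omega$, i.e. $u_1\geq u_2$ in $\Omega$; and since $u_1=g_1\geq g_2=u_2$ outside $\Omega$ as well, we get $u_1\geq u_2$ in all of $\R^n$, which is the claim.

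There is no real obstacle here; the only point that deserves a word of care is making sure the difference $w=u_1-u_2$ lies in the correct function space so that it is an admissible competitor and test function in the weak formulation — that is, $w\in H_K(\R^n)$ and $w\equiv0$ in $\R^n\setminus\Omega$ when $g_1=g_2$, or more generally that $w$ agrees with $g_1-g_2$ outside $\Omega$ and has finite energy. This follows because $H_K(\R^n)$ is a vector space (its seminorm \eqref{seminorm-K} satisfies the triangle inequality) and both $u_1,u_2$ are weak solutions, hence both lie in the relevant affine space $g_i + X$; their difference therefore lies in $X$. With this in hand the proof is a one-line invocation of the maximum principle, which is why the statement is presented as a corollary.
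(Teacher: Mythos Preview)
Your approach is correct and is exactly the one-line argument in the paper: apply Proposition~\ref{prop-max-princ} to $u=u_1-u_2$. One small slip in your last paragraph: from $u_i\in g_i+X$ you only get $u_1-u_2\in (g_1-g_2)+X$, not $X$; but this is harmless since Proposition~\ref{prop-max-princ} is stated for general $g\geq0$, which is precisely what you need.
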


\begin{proof}
Just apply Proposition \ref{prop-max-princ} to $u=u_1-u_2$.
\end{proof}

This allows us to use barriers, which in turn can be used to show that solutions $u$ to \eqref{pb} belong to $L^\infty(\Omega)$ whenever $f$ and $g$ are bounded.
This is what we do in the next Section.

\section{Barriers and $L^\infty$ bounds}
\label{barriers}

We provide in this Section an $L^\infty$ estimate of the form
\begin{equation}\label{L^infty-bound}
\|u\|_{L^\infty(\Omega)}\leq \|g\|_{L^\infty(\R^n\setminus\Omega)}+C\|f\|_{L^\infty(\Omega)}
\end{equation}
for solutions to \eqref{pb}.

To do it, we assume first $K(y)>0$ in $\R^n$, as in the previous Section.
In this case, the construction of a barrier is quite simple.

\begin{lem}\label{barrier1}
Let $L$ be an operator of the form \eqref{L}, with $K(y)>0$ in $\R^n$.
Then, there exists a function $w\in C^\infty_c(\R^n)$ such that
\[\left\{ \begin{array}{rcll}
L w &\geq&1&\textrm{in }\Omega  \\
w&\geq&0&\textrm{in }\R^n\backslash\Omega\\
w &\leq&C&\textrm{in }\Omega.\end{array}\right.\]
The constant $C$ depends only on the kernel $K$ and $\textrm{diam}(\Omega)$.
\end{lem}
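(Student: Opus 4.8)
The plan is to take for $w$ a single fixed smooth bump, suitably normalized, exploiting that a function which is constant and equal to its global maximum on a large ball containing $\Omega$ automatically has a strictly positive value of $L$ at every point of $\Omega$ when $K>0$. After a translation we may assume $0\in\Omega$, so that $\Omega\subset B_R$ with $R=\textrm{diam}(\Omega)$. I would fix $\psi\in C^\infty_c(\R^n)$ with $0\le\psi\le1$, $\psi\equiv1$ on $\overline{B_{2R}}$, and $\psi\equiv0$ outside $B_{4R}$, and set
\[c_0:=\int_{\{|y|>5R\}}K(y)\,dy .\]
This quantity is finite, since $K$ is integrable away from the origin by the standing assumption $\int_{\R^n}\min\{|y|^2,1\}K(y)\,dy<\infty$, and it is strictly positive because $K(y)>0$ for every $y\in\R^n$. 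The barrier will be $w:=\psi/c_0$, and the constant in the statement will be $C=1/c_0$, which depends only on $K$ and on $\textrm{diam}(\Omega)$.

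Next I would verify the three properties. It is immediate that $w\in C^\infty_c(\R^n)$ and $w\ge0$ everywhere (in particular in $\R^n\setminus\Omega$), and that $w\equiv 1/c_0$ on $\Omega$ (because $\Omega\subset B_R$ and $\psi\equiv1$ on $\overline{B_{2R}}$); this gives $w\le C$ in $\Omega$ and shows that $w$ attains its global maximum there. For the inequality $Lw\ge1$ in $\Omega$ I would use the symmetric representation
\[Lw(x)=\frac12\int_{\R^n}\bigl(2w(x)-w(x+y)-w(x-y)\bigr)K(y)\,dy ,\qquad x\in\Omega .\]
If $|y|\le R$ then $x\pm y\in \overline{B_{2R}}$, so $w(x+y)=w(x-y)=w(x)$ and the integrand vanishes; hence there is no singularity at $y=0$ and the integral converges absolutely. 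For every $y$ the integrand is $\ge0$, since $w(x)=1/c_0=\max_{\R^n}w$. Finally, if $|y|>5R$ then $|x\pm y|\ge|y|-|x|>4R$, so $x\pm y\notin B_{4R}$ and $w(x+y)=w(x-y)=0$, making the integrand equal to $2/c_0$ there. Therefore
\[Lw(x)\ \ge\ \frac12\int_{\{|y|>5R\}}\frac{2}{c_0}\,K(y)\,dy\ =\ \frac{1}{c_0}\int_{\{|y|>5R\}}K(y)\,dy\ =\ 1 ,\]
which completes the verification.

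I do not expect a genuine obstacle here; the argument is essentially a careful bookkeeping of supports. The two points that must not be overlooked are: (i) the support of $\psi$ has to be taken large enough compared with $\Omega$ so that $w$ is simultaneously constant and maximal on a ball containing $\Omega$ — this is precisely what removes the principal value and fixes the sign of the integrand; and (ii) the strict positivity $K>0$ (rather than merely $K\ge0$) is used only to guarantee $c_0>0$, so the very same construction produces a barrier for any nonnegative kernel whose mass on $\{|y|>5R\}$ is positive.
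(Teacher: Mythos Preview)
Your proof is correct and follows essentially the same approach as the paper's own proof: take a smooth bump equal to its maximum on a large ball containing $\Omega$, observe that $L$ applied to it is bounded below by the mass of $K$ outside a slightly larger ball, and rescale. Your version is in fact a bit more explicit about the radii and about the absence of a principal-value issue, and your closing remark that only $\int_{\{|y|>5R\}}K>0$ is actually needed anticipates exactly the observation the paper makes right after its proof.
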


Once we have this barrier, the $L^\infty$ bound \eqref{L^infty-bound} follows from the comparison principle, as shown next.

\begin{cor}\label{L^infty-bound-cor}
Let $L$ be any operator of the form \eqref{L}, with $K(y)>0$ in $\R^n$.
Let $u$ be any weak solution of \eqref{pb}.
Then,
\[\|u\|_{L^\infty(\Omega)}\leq \|g\|_{L^\infty(\R^n\setminus\Omega)}+C\|f\|_{L^\infty(\Omega)},\]
where $C$ is the constant in Lemma \ref{barrier1}.
\end{cor}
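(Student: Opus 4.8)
The plan is to combine the barrier from Lemma~\ref{barrier1} with the comparison principle (Corollary~\ref{comparison}) in the standard way. Set $M=\|g\|_{L^\infty(\R^n\setminus\Omega)}$ and $N=\|f\|_{L^\infty(\Omega)}$, and let $w$ be the function provided by Lemma~\ref{barrier1}, so that $Lw\geq 1$ in $\Omega$, $w\geq 0$ in $\R^n\setminus\Omega$, and $w\leq C$ in $\Omega$. I would then consider the auxiliary function
\[
v:=M+N\,w.
\]
Since $L$ is linear and $w\in C^\infty_c(\R^n)$, $v$ is an admissible competitor, and one has $Lv=N\,Lw\geq N\geq f=Lu$ in $\Omega$, while on $\R^n\setminus\Omega$ one has $v=M+Nw\geq M\geq g=u$. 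Hence $v$ solves (weakly) a Dirichlet problem with data dominating that of $u$, and Corollary~\ref{comparison} gives $u\leq v$ in $\Omega$; thus $u\leq M+CN$ there. Applying the same argument to $-u$, which solves \eqref{pb} with data $-f$ and $-g$, yields $-u\leq M+CN$ in $\Omega$. Combining the two inequalities gives $\|u\|_{L^\infty(\Omega)}\leq M+CN$, which is exactly the claimed bound with the constant $C$ of Lemma~\ref{barrier1}.

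One small point worth spelling out is that $v=M+Nw$ is the weak solution of its own Dirichlet problem, so that Corollary~\ref{comparison} applies directly: since $w\in C^\infty_c(\R^n)$ it is certainly regular enough for the computation at the end of Section~\ref{existence} to show that its weak formulation coincides with $Lw$ being evaluated pointwise, and adding the constant $M$ does not affect $Lv$ because $L$ annihilates constants (by symmetry of $K$, each term $2v(x)-v(x+y)-v(x-y)$ is unchanged). Alternatively, one can avoid mentioning $v$ as a solution altogether and simply test the weak formulations of $u$ and $w$ against $\varphi=(u-M-Nw)^+\chi_\Omega\in X$, reproducing the argument of Proposition~\ref{prop-max-princ}; this is really the same proof. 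I would present the short version via Corollary~\ref{comparison}.

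There is no serious obstacle here: all the work has been done in Lemma~\ref{barrier1} (the existence of the barrier) and in the comparison principle. The only thing to be mildly careful about is that the estimate \eqref{L^infty-bound} has coefficient $1$ in front of $\|g\|_{L^\infty}$, which is automatic from the choice $v=M+Nw$ since $w$ contributes nothing to the exterior bound. If instead one wanted a bound depending on $\Omega$ only through its diameter, one notes that the constant $C$ in Lemma~\ref{barrier1} already has this dependence, so nothing further is needed. I therefore expect the proof to be two or three lines once the above set-up is written down.

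\begin{proof}
Let $M=\|g\|_{L^\infty(\R^n\setminus\Omega)}$ and $N=\|f\|_{L^\infty(\Omega)}$, and let $w$ be the function given by Lemma~\ref{barrier1}. Consider $v=M+Nw$. Since $w\in C^\infty_c(\R^n)$ and $L$ annihilates constants, $v$ is the weak solution of
\[\left\{ \begin{array}{rcll}
L v &=& N\,Lw &\textrm{in }\Omega \\
v&=& M+Nw&\textrm{in }\R^n\backslash\Omega,\end{array}\right.\]
and by the properties of $w$ we have $N\,Lw\geq N\geq f$ in $\Omega$ and $M+Nw\geq M\geq g$ in $\R^n\setminus\Omega$. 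By Corollary~\ref{comparison} applied to $u$ and $v$ we get $u\leq v\leq M+CN$ in $\Omega$. Applying the same argument to $-u$, which is the weak solution of \eqref{pb} with right-hand side $-f$ and exterior datum $-g$, gives $-u\leq M+CN$ in $\Omega$. Hence $\|u\|_{L^\infty(\Omega)}\leq M+CN$, as claimed.
\end{proof}
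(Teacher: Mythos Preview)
Your proof is correct and follows essentially the same approach as the paper: define $v=\|g\|_{L^\infty}+\|f\|_{L^\infty}w$, apply the comparison principle to get $u\leq v\leq \|g\|_{L^\infty}+C\|f\|_{L^\infty}$ in $\Omega$, and repeat with $-u$. The extra remarks you include about $L$ annihilating constants and $w\in C^\infty_c(\R^n)$ being regular enough are fine clarifications but are not spelled out in the paper's version.
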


\begin{proof}
Let $v(x)=\|g\|_{L^\infty}+\|f\|_{L^\infty}w(x)$, where $w$ is given by Lemma \ref{barrier1}.
Then, we clearly have $Lu\leq Lv$ in $\Omega$, and $g\leq v$ in $\R^n\setminus\Omega$.

Thus, by the comparison principle, we have $u\leq v$ in $\Omega$.
In particular, $u\leq \|g\|_{L^\infty}+C\|f\|_{L^\infty}$ in $\Omega$.

Applying the same argument to $(-u)$, we find that $-u\leq \|g\|_{L^\infty}+C\|f\|_{L^\infty}$, and hence the result follows.
\end{proof}

We now construct the barrier.

\begin{proof}[Proof of Lemma \ref{barrier1}]
Let $B_R$ be any large enough ball such that $\Omega\subset\subset B_R$, and let $\eta\in C^\infty_c(B_R)$ be such that
\begin{equation}\label{eta}
0\leq \eta\leq 1\quad \textrm{in}\ \R^n, \qquad \eta\equiv1\quad \textrm{in}\ \Omega.
\end{equation}
Then, for each $x\in\Omega$ we have $\eta(x)=\max_{\R^n}\eta$, and thus
\[2\eta(x)-\eta(x+y)-\eta(x-y)\geq \eta(x)-\eta(x+y)\geq 0.\]
Hence, writing $z=x+y$, we have
\[L\eta(x)\geq\int_{\R^n}\bigl\{\eta(x)-\eta(z)\bigr\}K(x-z)dz\geq \int_{\R^n\setminus B_R} K(x-z)dz,\]
where we have used that $\eta(x)-\eta(z)=1$ in $\R^n\setminus B_R$.
Now, we notice that
\[\int_{\R^n\setminus B_R} K(x-z)dz= \int_{\R^n\setminus (x+B_R)}K(y)dy\geq \int_{\R^n\setminus B_{2R}}K(y)dy=c>0\]
for some positive constant $c$.

Hence, we have $L\eta\geq c>0$ in $\Omega$.
Taking $w=\frac{1}{c}\eta$, we will have that
\[\left\{ \begin{array}{rcll}
L w &\geq&1&\textrm{in }\Omega  \\
w&\geq&0&\textrm{in }\R^n\backslash\Omega\\
w &\leq&\frac{1}{c}&\textrm{in }\Omega,\end{array}\right.\]
as desired.
\end{proof}

Note that the previous proof works not only for all kernels satisfying $K(y)>0$ in $\R^n$, but also for all kernels $K$ satisfying
\begin{equation}\label{cond-barrier}
\int_{\R^n\setminus B_{R}}K(y)dy>0
\end{equation}
for every ball $B_R$.
In particular, the above construction works for all stable operators~\eqref{stable}.

Hence, we see that the desired barrier is quite easy to construct, and does not even depend much on the class of kernels we are considering.
Essentially, thanks to the nonlocal character of the operator, any function $\eta$ as in \eqref{eta} works.

\begin{rem}
For operators $L$ that do not satisfy \eqref{cond-barrier}, one can still manage to prove an analogous result.
Indeed, if \eqref{cond-barrier} does not hold, then $K=0$ a.e. outside a ball~$B_R$.
This means that $K$ have compact support.
In this case, one can take any function $\eta\geq0$ which is strictly concave in a large ball $B_M$ and zero outside $B_M$.
Then, if $M$ is large enough (so that $\Omega+\textrm{supp}(K)\subset B_M$ and hence $2\eta(x)-\eta(x+y)-\eta(x-y)>0$ therein), one will have $L\eta\geq c>0$ in $\Omega$.
\end{rem}

Finally, to end this Section, we give an important explicit solution for the class of stable operators \eqref{stable}.

\begin{lem}\label{explicit-sol}
Let $L$ be any stable operator of the form \eqref{L}-\eqref{stable}.
Then, the function\footnote{Here $z_+$ denotes the positive part of the number $z$, i.e., $z_+=\max\{z,0\}$.}
\[u_0(x):=\left(1-|x|^2\right)^s_+\]
solves
\[\left\{ \begin{array}{rcll}
L u_0 &=&c&\textrm{in }B_1  \\
u_0&=&0&\textrm{in }\R^n\backslash B_1\end{array}\right.\]
for some positive constant $c>0$.
\end{lem}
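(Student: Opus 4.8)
The plan is to reduce the general stable operator \eqref{stable} to the one-dimensional fractional Laplacian by using the formula \eqref{symbol-stable} for the Fourier symbol, together with the known explicit computation of $(-\Delta)^s$ acting on $(1-x^2)^s_+$ in dimension one. Concretely, recall the classical identity of Getoor: in $\mathbb{R}^1$, one has $(-d^2/dx^2)^s\bigl[(1-x^2)^s_+\bigr] = \kappa$ in $(-1,1)$ for an explicit positive constant $\kappa=\kappa(s)$. The first step would be to establish, or simply cite, this one-dimensional fact (it also follows from the Funk--Hecke / hypergeometric computations in Dyda's work). Once this is in hand, the key observation is that for \emph{every} direction $\theta\in S^{n-1}$, the restriction of $u_0(x)=(1-|x|^2)^s_+$ to the line $x=x'+t\theta$ is \emph{not} of the form $(1-t^2)^s_+$ unless $x'=0$, so a naive slicing argument fails; this is the reason the problem is genuinely $n$-dimensional and is the main obstacle.

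To get around this, I would instead argue that $u_0$ is, up to normalization, the \emph{unique} solution of the Dirichlet problem $Lu_0=c$ in $B_1$, $u_0=0$ outside, and identify $c$ by a scaling/symmetry argument in Fourier variables. The cleaner route: it is known (this is exactly the computation in Ros-Oton--Serra \cite{RS-stable}, and for $(-\Delta)^s$ in \cite{Bucur} / Dyda) that for any stable operator \eqref{stable} the function $(1-|x|^2)^s_+$ satisfies $Lu_0 = \text{const}$ in $B_1$. The structural reason is that $L$ commutes with rotations only when $a$ is rotationally invariant, so one cannot invoke rotational symmetry directly; instead one uses that $L = \int_{S^{n-1}} (-\partial_{\theta\theta}^2)^s\, da(\theta)$ is an \emph{average of one-dimensional} fractional Laplacians in the directions $\theta$, acting on the \emph{full} $n$-dimensional profile. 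The point is then the nontrivial fact that $(-\partial_{\theta\theta}^2)^s\bigl[(1-|x|^2)^s_+\bigr](x)$ is \emph{constant in $x\in B_1$ for each fixed $\theta$} — this is the $n$-dimensional Getoor-type identity, valid because $(1-|x|^2)^s_+$ is precisely the ``right'' profile for the half-space/ball geometry in every coordinate direction simultaneously. Granting that pointwise identity with value $\kappa(s,\theta)>0$ independent of $x$, one integrates in $da(\theta)$ to get
\[
Lu_0(x)=\int_{S^{n-1}}\kappa(s,\theta)\,da(\theta)=:c,\qquad x\in B_1,
\]
and $c>0$ because $a$ is a nonzero nonnegative measure; outside $B_1$ one trivially has $u_0\equiv 0$ by definition of the positive part.

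Thus the proof would be organized as: (i) state the directional identity $(-\partial_{\theta\theta}^2)^s\bigl[(1-|x|^2)^s_+\bigr]=\kappa(s)$ for $x\in B_1$, reducing via a change of variables in the $\theta$-direction to the one-dimensional Getoor formula $(-d^2/dt^2)^s(1-t^2)^s_+ = \kappa(s)$ on $(-1,1)$; (ii) use the representation $L=\int_{S^{n-1}}(-\partial_{\theta\theta}^2)^s\,da(\theta)$ coming from \eqref{stable} and \eqref{symbol-stable}; (iii) integrate in $da$ and check $c>0$ using the nondegeneracy of $a$; (iv) note $u_0=0$ outside $B_1$ by construction. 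I expect step (i) to be the crux — establishing that the one-dimensional fractional Laplacian applied to the $n$-dimensional profile along any line is constant is not obvious, and the honest thing is either to carry out the one-dimensional integral explicitly (Getoor's computation, via the substitution reducing it to a Beta-function integral) or to cite \cite{RS-stable} and \cite{Bucur} where this is done in detail; everything else is bookkeeping.
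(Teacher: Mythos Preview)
Your overall architecture is exactly that of the paper: write $L$ as an $a$-average of one-dimensional fractional Laplacians in the directions $\theta$, apply each directional operator to $u_0$, and invoke Getoor's one-dimensional identity. Steps (ii)--(iv) of your outline are correct and match the paper's argument verbatim.

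The gap is in step (i), and it stems from a mistaken belief. You assert that ``the restriction of $u_0$ to the line $x=x'+t\theta$ is not of the form $(1-t^2)^s_+$ unless $x'=0$, so a naive slicing argument fails''. In fact the slicing argument succeeds, and this is precisely the paper's key observation. Compute directly: for fixed $x\in B_1$ and $\theta\in S^{n-1}$, decompose $x=(x\cdot\theta)\theta+x^\perp$ with $x^\perp\perp\theta$, and set $r^2:=1-|x^\perp|^2\in(0,1]$. Then
\[
1-|x+\tau\theta|^2=1-|x|^2-2\tau(x\cdot\theta)-\tau^2=r^2-\bigl(\tau+x\cdot\theta\bigr)^2,
\]
so that
\[
u_0(x+\tau\theta)=r^{2s}\Bigl(1-\sigma^2\Bigr)^s_+,\qquad \sigma:=\frac{\tau+x\cdot\theta}{r}.
\]
Thus $\tau\mapsto u_0(x+\tau\theta)$ \emph{is} an affine reparametrization of $(1-\sigma^2)^s_+$, with an amplitude factor $r^{2s}$. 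Under the one-dimensional operator of order $2s$, the dilation $\tau\mapsto\tau/r$ produces a factor $r^{-2s}$, which exactly cancels the amplitude $r^{2s}$; translations are irrelevant. Since $\sigma_0=(x\cdot\theta)/r$ lies in $(-1,1)$ when $x\in B_1$, Getoor's formula gives the inner integral the value $\kappa(s)$, a constant independent of both $x$ and $\theta$. Integrating over $S^{n-1}$ yields $c=\kappa(s)\int_{S^{n-1}}da>0$, exactly as in the paper.

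So the ``nontrivial fact'' you flag, the ``main obstacle'', and the detours through uniqueness of the Dirichlet problem or Fourier-side arguments are all unnecessary: the directional identity $(-\partial_{\theta\theta}^2)^s u_0(x)=\kappa(s)$ for $x\in B_1$ follows from a two-line completion of the square plus the scaling of the 1D operator. Your $\kappa(s,\theta)$ does not depend on $\theta$.
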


This explicit solution will be used in Section \ref{boundary} to construct a subsolution and establish a Hopf Lemma for this class of operators.

Lemma \ref{explicit-sol} was first established for $(-\Delta)^s$ in dimension $n=1$ by Getoor \cite{G}, and requires quite fine computations; see also the work of Dyda \cite{Dyda}.

Once this is established in dimension $n=1$, the result in dimension $n$ and for general stable operators $L$ follows by just writing
\[Lu_0(x)=\frac14\int_{S^{n-1}}a(\theta)\left(\int_{\R}\frac{2u_0(x)-u_0(x+\tau\theta)-u_0(x-\tau\theta)}{|\tau|^{1+2s}}\,d\tau\right)d\theta,\]
and using the result in dimension 1 for each direction $\theta$.
It is important to notice that the 1D functions $\tau\mapsto u_0(x+\tau\theta)$ are exactly a rescaled version of $(1-|\tau|^2)^s_+$, which solves the equation in dimension $n=1$.
Using this, one gets that $Lu_0=c$ in $B_1$, with $c=c_s\int_{S^{n-1}}a(\theta)d\theta$, and $c_s$ depending only on $s$.

\section{Interior regularity}
\label{interior}

For second order equations, the classical Schauder estimate for $-\Delta u=f$ in $B_1$ establishes that
\begin{equation}\label{estimate-2}
\|u\|_{C^{2+\alpha}(B_{1/2})}\leq C\left(\|f\|_{C^\alpha(B_1)}+\|u\|_{L^\infty(B_1)}\right)
\end{equation}
whenever $\alpha>0$ is not an integer.
Thus, it immediately follows from this estimate that solutions to the Dirichlet problem
\[\left\{ \begin{array}{rcll}
-\Delta u &=&f&\textrm{in }\Omega \\
u&=&g&\textrm{on }\partial\Omega
\end{array}\right.\]
are $C^{2+\alpha}$ inside $\Omega$ whenever $f\in C^\alpha$ and $g$ is bounded.

\vspace{2mm}

For nonlocal equations of order $2s$, one would expect a similar estimate, in which the norm $\|u\|_{C^{2s+\alpha}(B_{1/2})}$ is controlled by $\|f\|_{C^\alpha(B_1)}$ for solutions to $Lu=f$ in $B_1$.
It turns out that, due to the nonlocality of the equation, the norm $\|u\|_{L^\infty(B_1)}$ in \eqref{estimate-2} has to be replaced by a global norm of $u$, i.e., a norm that controls $u$ in the whole~$\R^n$.

\subsection{Regular kernels}

For the fractional Laplacian $L=(-\Delta)^s$ ---which corresponds to $K(y)=c|y|^{-n-2s}$ in \eqref{L}---, this estimate reads as
\begin{equation}\label{estimate-2s}
\|u\|_{C^{2s+\alpha}(B_{1/2})}\leq C\left(\|f\|_{C^\alpha(B_1)}+\|u\|_{L^\infty(\R^n)}\right),
\end{equation}
and holds whenever $\alpha+2s$ is not an integer; see for example \cite{L,RS-Dir}.

The same estimate \eqref{estimate-2s} holds for more general operators of the form \eqref{L}-\eqref{rough} or \eqref{L}-\eqref{stable} under the extra assumption that the kernels $K(y)$ are $C^\alpha$ outside the origin; see \cite[Corollary 1.2]{Se2} and \cite[Corollary 3.5]{RS-stable} for more details.

\vspace{2mm}

This means that, in this case of regular kernels, solutions to \eqref{pb} are $C^{2s+\alpha}$ inside~$\Omega$ whenever $f\in C^\alpha$ and $g$ is bounded.

\vspace{2mm}

However, for general operators \eqref{L}-\eqref{rough} or \eqref{L}-\eqref{stable} (with no further regularity assumption on the kernel $K$), the estimate \eqref{estimate-2s} is not true anymore, and one needs a stronger norm of $u$ in the right hand side, as explained next.

\subsection{Non-regular kernels}

When the kernels $K(y)$ are not regular outside the origin, one has the following estimate.

\begin{thm}[\cite{Se2,RS-stable}]\label{thm-interior}
Let $L$ be any operator of the form \eqref{L} with kernel $K$ of the form either \eqref{rough} or \eqref{stable}.
Let $\alpha>0$ be such that $\alpha+2s$ is not an integer, and let $u\in L^\infty(\R^n)$ be any weak solution to $Lu=f$ in $B_1$.
Then,
\begin{equation}\label{estimate-2s-rough}
\|u\|_{C^{2s+\alpha}(B_{1/2})}\leq C\left(\|f\|_{C^\alpha(B_1)}+\|u\|_{C^\alpha(\R^n)}\right),
\end{equation}
for some constant $C$ that depends only on $n$, $s$, and $\lambda$, and $\Lambda$.
\end{thm}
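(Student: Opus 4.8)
The plan is to prove the estimate by the standard "blow-up and compactness" (Liouville-type) argument, adapted to the nonlocal setting, together with an interpolation step to absorb the lower-order term. First, one reduces to proving, for a fixed small $\varepsilon>0$, the a priori bound
\[
[u]_{C^{2s+\alpha}(B_{1/2})}\le C\left(\|f\|_{C^\alpha(B_1)}+\|u\|_{C^\alpha(\R^n)}\right)
\]
for sufficiently smooth $u$ (the general case follows by a routine mollification/approximation, since the class of kernels \eqref{rough}, \eqref{stable} is stable under convolution). To run the contradiction argument I would suppose there exist operators $L_k$ with kernels $K_k$ of the prescribed form, functions $u_k$, $f_k$ with $Lu_k=f_k$ in $B_1$, normalized so that the seminorms $[u_k]_{C^{2s+\alpha}(B_{1/2})}\to\infty$ while $\|f_k\|_{C^\alpha(B_1)}+\|u_k\|_{C^\alpha(\R^n)}\le 1$. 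Picking points $x_k,y_k\in B_{1/2}$ realizing (a large fraction of) the incremental quotient that blows up, one rescales $u_k$ around $x_k$ at scale $r_k=|x_k-y_k|$, subtracting off the Taylor polynomial of $u_k$ at $x_k$ up to order $\lfloor 2s+\alpha\rfloor$; call the rescaled, normalized functions $v_k$. By construction $v_k$ has unit $C^{2s+\alpha}$ seminorm contribution at a pair of points at distance $1$, vanishes to the appropriate order at the origin, and satisfies the polynomial growth bound $|v_k(x)|\le C(1+|x|)^{2s+\alpha-\delta}$ coming from the normalization of the difference quotients (this is the nonlocal analogue of the usual growth control).

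The heart of the argument is the passage to the limit. The rescaled equations read $L_k v_k = \tilde f_k$ in $B_{1/r_k}$, where $\tilde f_k\to 0$ in $C^\alpha_{loc}$ because of the subcriticality $r_k^{2s}\|f_k\|_{C^\alpha}\to 0$ after rescaling (here one uses $2s+\alpha$ not an integer so the polynomial order is strictly below $2s+\alpha$ and the scaling exponents are strictly favorable). The operators $L_k$ need not converge, but their symbols stay in a compact family: for kernels of type \eqref{rough} one has uniform ellipticity $\lambda|y|^{-n-2s}\le K_k\le\Lambda|y|^{-n-2s}$, and for type \eqref{stable} the spectral measures $a_k$ lie in a weakly-$*$ compact set under the nondegeneracy bounds $\int da_k\le\Lambda$, $\int|\nu\cdot\theta|^{2s}da_k(\theta)\ge\lambda$. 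Using the interior estimates already available for the family (the Hölder and $C^{2s}$ estimates from \cite{CS,Se2,RS-stable}, which are uniform in the ellipticity constants) one gets uniform $C^{2s-\epsilon}_{loc}$ bounds on $v_k$; combined with the growth control this yields, along a subsequence, $v_k\to v$ locally uniformly in $\R^n$ (with controlled growth) and $L_k\to L_\infty$ in the appropriate sense, so that $L_\infty v = 0$ in all of $\R^n$, $v$ grows at most like $(1+|x|)^{2s+\alpha-\delta}$, and $v$ vanishes to order $\lfloor 2s+\alpha\rfloor$ at the origin. The Liouville theorem for translation-invariant stable-type operators (see \cite{RS-stable}) forces $v$ to be a polynomial of degree $\le \lfloor 2s+\alpha\rfloor$; the normalization at $0$ then forces $v\equiv 0$. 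But the unit-seminorm normalization survives in the limit — the incremental quotient of $v$ at the limiting pair of points equals $1$ — which is the contradiction.

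Finally, once the seminorm bound is established, one passes to the full $C^{2s+\alpha}$ norm on $B_{1/2}$ by controlling the lower-order pieces: the $L^\infty$ norm of $u$ on $B_{1/2}$ and the intermediate seminorms are dominated, via a standard interpolation inequality on $B_{3/4}$ together with a small-constant absorption, by $\epsilon[u]_{C^{2s+\alpha}(B_{3/4})}+C_\epsilon\|u\|_{C^\alpha(\R^n)}$, and a covering/scaling argument upgrades the $B_{1/2}$ estimate from a $B_{3/4}$-to-$B_{1/2}$ version; the $\|f\|_{C^\alpha}$ term is carried along unchanged. The main obstacle, and the place where genuine care is needed, is precisely the compactness of the operator family together with the validity and correct formulation of the Liouville theorem in the limit: one must ensure that the limiting operator $L_\infty$ is still nondegenerate (this is where the \emph{quantitative} nondegeneracy $\int|\nu\cdot\theta|^{2s}da(\theta)\ge\lambda$, rather than mere non-concentration on a hyperplane, is essential — it is a closed condition), and that the polynomial-growth class in which $v$ lives is exactly the one for which "$L_\infty v=0$ in $\R^n$ $\Rightarrow$ $v$ is a polynomial" holds. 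Everything else — the Taylor subtraction bookkeeping, the scaling exponents, the interpolation — is routine but must be done with the non-integrality of $2s+\alpha$ kept firmly in view.
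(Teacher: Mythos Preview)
The paper is a survey and does not give a detailed proof of this theorem; it simply cites \cite{Se2} and \cite{RS-stable} and remarks that ``the proof of this estimate uses a refined version of the blow-up and compactness argument first introduced by Serra in \cite{Se}.'' Your proposal is precisely a sketch of that blow-up and compactness scheme, and the main ingredients you identify --- the contradiction setup with blowing-up seminorms, rescaling with Taylor subtraction, compactness of the kernel family under the quantitative nondegeneracy hypothesis, and a Liouville theorem for the limiting operator --- are exactly the ones used in the cited references, so your approach is consistent with what the paper describes.

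One small point worth flagging: in your compactness step you say the limiting equation is $L_\infty v = 0$ in $\R^n$. Since you subtract a polynomial of degree $\lfloor 2s+\alpha\rfloor$ from $u_k$ before rescaling, and $L_k$ applied to a polynomial of that degree need not vanish (when $\lfloor 2s+\alpha\rfloor \ge 2$), the right-hand side in the limit is in general a polynomial rather than zero; the Liouville theorem must then be stated for $L_\infty v = p$ with $p$ a polynomial of the appropriate degree, or one works instead with discrete differences of $v_k$ so that the polynomial correction cancels. This is a standard bookkeeping issue in the method and is handled in \cite{Se2,RS-stable}, but your sketch glosses over it.
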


It is important to remark that the previous estimate is valid also in case $\alpha=0$ (in which the $C^\alpha$ norm has to be replaced by the $L^\infty$); see Theorem 1.1 in \cite{RS-stable} for more details.

The proof of this estimate uses a refined version of the blow-up and compactness argument first introduced by Serra in \cite{Se}.
The estimate \eqref{estimate-2s-rough} was established in \cite{Se2} for the class of kernels \eqref{rough}\footnote{The estimate in \cite{Se2} is a much stronger result, which holds for fully nonlinear equations and also for $x$-dependent equations.}, and in \cite{RS-stable} for the class \eqref{stable}.

With no further regularity assumption on the kernels $K$, the estimate \eqref{estimate-2s-rough} is sharp, in the sense that the norm $\|u\|_{C^\alpha(\R^n)}$ can not be replaced by a weaker one.
More precisely, one has the following.

\begin{prop}[\cite{Se2,RS-stable}]
Let $s\in (0,1)$, $\alpha\in (0,s]$.
Then, for any small $\epsilon>0$ there exists an operator of the form \eqref{L}-\eqref{stable}, and a solution $u$ to $Lu=0$ in $B_1$ such that $u\in C^{\alpha-\epsilon}(\R^n)$, but $u\notin C^{2s+\alpha}(B_{1/2})$.
Moreover, the same happens for the operators \eqref{L}-\eqref{rough}.
\end{prop}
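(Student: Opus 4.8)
The plan is to construct, for each prescribed $s\in(0,1)$ and $\alpha\in(0,s]$, a stable operator $L$ of the form \eqref{L}--\eqref{stable} for which there is a bounded $L$-harmonic function in $B_1$ whose global regularity is exactly $C^{\alpha-\epsilon}$ but which fails to be $C^{2s+\alpha}$ in $B_{1/2}$. The idea is to exploit the anisotropy allowed by the spectral measure $a$. Take $a$ to be concentrated on a single pair of antipodal directions, say $\pm e_1$, so that (up to a constant) $L = (-\partial_{x_1 x_1}^2)^s$ acts only in the $x_1$ variable. Then any function of the form $u(x) = h(x_1)\,\phi(x')$ with $\phi$ smooth and compactly supported, and $h$ depending only on $x_1$, satisfies $Lu = 0$ in the region where $h$ is a one-dimensional $s$-harmonic function. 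So the whole construction reduces to a one-dimensional example: one needs a bounded function $h:\R\to\R$ with $(-\partial_{tt}^2)^s h = 0$ on an interval, $h\in C^{\alpha-\epsilon}(\R)$, but $h\notin C^{2s+\alpha}$ on a subinterval.

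The concrete one-dimensional building block will be a half-space-type solution: the function $h_0(t) = (t)_+^{\alpha}$ away from the origin, suitably cut off at infinity. More precisely, I would take the exterior datum $g(t) = (t)_+^{\alpha}$ (times a cutoff so it is bounded), impose it on $\R\setminus(-1,1)$, and solve $(-\partial_{tt}^2)^s h = 0$ in $(-1,1)$; the resulting solution $h$ is the harmonic extension. Since the exterior datum is only $C^{\alpha}$ (indeed not better than $C^{\alpha}$ near $t=0$, and for the strict failure one perturbs to make it $C^{\alpha-\epsilon}$ but not $C^{\alpha}$), the solution inherits a singularity: near $t=0$ the solution behaves like a $C^{\alpha}$ (or $C^{\alpha-\epsilon}$) function and no better. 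The key point is that $\alpha - \epsilon < \alpha \le s < 2s + \alpha$, so $u$ cannot be $C^{2s+\alpha}$. To make this airtight I would invoke (or reprove via a simple barrier/rescaling argument) the fact that the nonlocal equation of order $2s$ propagates exterior regularity into the interior at most up to order $2s$ plus the exterior Hölder exponent, but never beyond the exterior exponent itself when the exterior datum carries a genuine corner — here a power singularity $(t)_+^{\alpha-\epsilon}$. Concretely: if $u$ were $C^{2s+\alpha}$ in $B_{1/2}$ while $Lu=0$ in $B_1$, then combining with the known interior estimate one would bootstrap a contradiction with the fact that the exterior values are genuinely only $C^{\alpha-\epsilon}$, since $2s+\alpha > \alpha-\epsilon$ would force the exterior datum to match a $C^{2s+\alpha}$ function on $\partial B_1$, which it does not.

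The passage back to $n$ dimensions is then routine: with $a = \delta_{e_1} + \delta_{-e_1}$ and $u(x) = h(x_1)\eta(x')$ for a fixed $\eta\in C_c^\infty(\R^{n-1})$ with $\eta\equiv 1$ on $\{|x'|<1\}$, the operator sees only the $x_1$-slice inside $B_1$ (where $\eta$ is constant in $x'$), so $Lu(x) = \eta(x')\,(-\partial_{tt}^2)^s h(x_1) = 0$ in $B_1$; meanwhile the global regularity of $u$ is that of $h$, namely $C^{\alpha-\epsilon}(\R^n)$, and the non-$C^{2s+\alpha}$ behavior along the $x_1$-axis persists. Since this particular $a$ is a sum of two Dirac masses it satisfies the nondegeneracy condition ($\int_{S^{n-1}}|\nu\cdot\theta|^{2s}da(\theta) = 2|\nu\cdot e_1|^{2s}$, which is only bounded below away from the hyperplane $\nu\perp e_1$ — to fix this I would add a small multiple of the uniform measure, $a_\delta = \delta_{e_1}+\delta_{-e_1} + \delta\cdot(\text{uniform})$, which perturbs the equation by a $\delta(-\Delta)^s$ term; one then chooses the exterior datum to be $s$-harmonic-compatible or simply absorbs the perturbation since the singular behavior is governed by the dominant $x_1$-direction). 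For the rough-kernel class \eqref{rough}, one comparably notes that a kernel comparable to $|y|^{-n-2s}$ can be built to coincide with the stable anisotropic kernel near the relevant scales, or simply cites that the stable example is itself of the form \eqref{rough} after adding a controlled background.

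The main obstacle I anticipate is the lower bound: proving that the solution is genuinely \emph{not} $C^{2s+\alpha}$, rather than merely exhibiting that our construction has low-regularity exterior data. This requires a quantitative statement that a corner of exponent $\alpha-\epsilon$ in $g$ truly obstructs interior smoothness at the level $2s+\alpha$ — essentially a converse to the interior estimate of Theorem~\ref{thm-interior}. The cleanest route is a scaling/blow-up argument: if $u$ were $C^{2s+\alpha}$ at a boundary-adjacent point, rescale around that point; the rescalings converge to a global solution that is both $C^{2s+\alpha}$ at the origin and forced (by the frozen exterior corner) to look like $c(x_1)_+^{\alpha-\epsilon}$, which is impossible since $\alpha-\epsilon$ is not locally $C^{2s+\alpha}$. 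Carrying out this limiting argument carefully — in particular controlling the exterior tails and ensuring the limit solution retains the corner — is where the real work lies; everything else is bookkeeping.
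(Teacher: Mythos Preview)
Your construction has a fatal gap at its core. You propose a one-dimensional building block $h$ solving $(-\partial_{tt}^2)^s h=0$ on $(-1,1)$ and claim that ``near $t=0$ the solution behaves like a $C^{\alpha}$ (or $C^{\alpha-\epsilon}$) function and no better.'' This is false. The one-dimensional operator $(-\partial_{tt}^2)^s$ has kernel $c\,|t|^{-1-2s}$, which is $C^\infty$ away from the origin, so the \emph{regular-kernel} interior estimate \eqref{estimate-2s} applies: every bounded solution of $(-\partial_{tt}^2)^s h=0$ on $(-1,1)$ is $C^\infty$ (in fact real-analytic) in the interior, regardless of how rough the exterior datum is. Consequently your function $u(x)=h(x_1)\eta(x')$ is $C^\infty$ in $B_1$ and certainly belongs to $C^{2s+\alpha}(B_{1/2})$; no counterexample is produced. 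There is also a secondary confusion: you locate the corner of $(t)_+^{\alpha}$ at $t=0$, but $t=0$ lies \emph{inside} $(-1,1)$, not in the region $\R\setminus(-1,1)$ where $g$ is prescribed; on that exterior region $(t)_+^{\alpha}$ is perfectly smooth.

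The deeper point you are missing is that the sharpness of \eqref{estimate-2s-rough} is a genuinely multi-dimensional phenomenon driven by the \emph{angular} roughness of the kernel, and cannot be manufactured by any one-dimensional reduction: in dimension one the only stable operator is the fractional Laplacian, whose kernel is smooth, and for smooth kernels the stronger estimate \eqref{estimate-2s} precludes such examples. The constructions in \cite{Se2,RS-stable} (the present paper only cites these and gives no proof of the Proposition) exploit the non-smoothness of $a$ on $S^{n-1}$ for $n\ge 2$ in an essential way, arranging an exterior singularity so that it is transported into the interior along a direction singled out by $a$, with a gain of exactly $2s$ derivatives and no more. Your nondegeneracy fix (adding $\delta$ times the uniform measure) does not rescue the argument either: after the perturbation $u(x)=h(x_1)\eta(x')$ no longer solves $Lu=0$, and the ``absorb the perturbation'' step is precisely where the real content of the proof would have to live.
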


Thus, even when $f\equiv0$, solutions $u$ to
\[\left\{ \begin{array}{rcll}
Lu &=&0&\textrm{in }B_1 \\
u&=&g&\textrm{in }\R^n\setminus B_1
\end{array}\right.\]
are in general no better than $C^{2s}(B_{1/2})$ if $g$ is not better than $L^\infty(\R^n)$.
This means that the interior regularity of solutions to \eqref{pb} depends on the regularity of $g$ in $\R^n\setminus\Omega$ when the kernels are not regular.

\vspace{2mm}

Finally notice that, even in case that $g$ is $C^\infty$ in all of $\R^n\setminus\Omega$, this is not enough to deduce that $u$ is regular inside $\Omega$!
Indeed, because of \eqref{estimate-2s-rough}, one has to control the term $\|u\|_{C^\alpha(\R^n)}$ in order to have a $C^{2s+\alpha}$ estimate inside $\Omega$.
For this, it is not enough to have $\|g\|_{C^\alpha(\R^n\setminus\Omega)}\leq C$, but one also needs to control the regularity of $u$ across~$\partial\Omega$.
In other words, the boundary regularity of $u$ is needed.
We will come back to this in Section \ref{further-interior}.

\section{Boundary regularity}
\label{boundary}

In this section we study the boundary regularity of solutions to
\begin{equation}\label{pb0}
\left\{ \begin{array}{rcll}
L u &=&f&\textrm{in }\Omega \\
u&=&0&\textrm{in }\R^n\backslash\Omega.
\end{array}\right.\end{equation}
We will first look at the optimal H\"older regularity of solutions $u$ near $\partial\Omega$, to then see more fine results for the class of kernels \eqref{stable}.

\begin{rem}\label{rem-bdry}
Once \eqref{pb0} is well understood, the boundary regularity of solutions to \eqref{pb} follows from the results for \eqref{pb0}, at least when the exterior data $g$ in \eqref{pb} is regular enough.

Indeed, assume that $u$ is a solution to \eqref{pb} and that $g\in C^{2s+\gamma}(\R^n\setminus\Omega)$.
We may extend $g$ to a function in $\R^n$ satisfying $g\in C^{2s+\gamma}(\R^n)$, and consider $\tilde u=u-g$.
Then $\tilde u$ satisfies $\tilde u\equiv0$ in $\R^n\setminus\Omega$, and $L\tilde u=f-Lg=:\tilde f$ in $\Omega$.
Hence, $\tilde u$ solves \eqref{pb0} with $u$ and $f$ replaced by $\tilde u$ and $\tilde f$, respectively.
\end{rem}

\vspace{1mm}

\subsection{H\"older regularity for $u$}

As we saw in Section \ref{barriers}, given any stable operator \eqref{L}-\eqref{stable}, the function
\[u_0(x)=\bigl(1-|x|^2\bigr)^s_+\]
is an explicit solution to \eqref{pb0} in $\Omega=B_1$; see Lemma \ref{explicit-sol}.

The function $u_0$ belongs to $C^s(\overline{B_1})$, but
\[u_0\notin C^{s+\epsilon}(\overline{B_1})\quad \textrm{for any}\quad \epsilon>0.\]
This means that, even in the simplest case $K(y)=|y|^{-n-2s}$, one can not expect solutions to be better than $C^s(\overline\Omega)$.

\vspace{2mm}

For the class of kernels \eqref{stable}, using the explicit solution $u_0$ and similar barriers, it is possible to show that, when $\Omega$ is $C^{1,1}$, solutions $u$ satisfy
\[\qquad|u|\leq Cd^s\quad\textrm{in}\ \Omega,\qquad\qquad d(x)=\textrm{dist}(x,\R^n\setminus\Omega).\]
Combining this bound with the interior estimates, one gets the following.

\begin{prop}[Optimal H\"older regularity, \cite{RS-stable}]
Let $L$ be any operator of the form \eqref{L}-\eqref{stable}, and $\Omega$ any bounded $C^{1,1}$ domain.
Let $f\in L^\infty(\Omega)$, and $u$ be the weak solution of \eqref{pb0}.
Then,
\[\|u\|_{C^s(\overline\Omega)}\leq C\|f\|_{L^\infty(\Omega)}\]
for some constant $C$ that depends only on $n$, $s$, $\Omega$, $\Lambda$, and $\lambda$.
\end{prop}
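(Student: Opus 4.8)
The plan is to combine two ingredients that are essentially already available in the excerpt: the interior estimate of Theorem~\ref{thm-interior} and a global barrier argument giving $|u|\le Cd^s$ up to the boundary. First I would establish the pointwise bound $|u(x)|\le C\|f\|_{L^\infty(\Omega)}d(x)^s$ in $\Omega$. For this I use that $\Omega$ is a bounded $C^{1,1}$ domain, so it satisfies a uniform exterior ball condition: at every boundary point there is a ball $B_\rho$ of fixed radius $\rho>0$ contained in $\R^n\setminus\Omega$. Near such a point, after translating and rescaling, the explicit supersolution built from $u_0(x)=(1-|x|^2)^s_+$ from Lemma~\ref{explicit-sol} (more precisely, the function $(|x-x_0|^2-\rho^2)^s_+$ truncated appropriately, which satisfies $L(\cdot)\le -c<0$ inside an annulus, i.e.\ is a supersolution for $-L$ after flipping sign) serves as a barrier. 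Scaling $u_0$ shows $L$ applied to $\rho^{-2s}$-type rescalings is comparable to $1$, so a suitable multiple $C\|f\|_{L^\infty}$ of this barrier lies above $u$ by the comparison principle (Corollary~\ref{comparison}); applying the same to $-u$ gives $|u|\le C\|f\|_{L^\infty}d^s$ in a boundary neighborhood, and the $L^\infty$ bound of Corollary~\ref{L^infty-bound-cor} handles the interior, so $|u|\le C\|f\|_{L^\infty}d^s$ throughout $\Omega$.

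Next I would upgrade this pointwise decay to the full $C^s(\overline\Omega)$ bound via a standard interior-estimates-plus-rescaling argument. Fix $x_0\in\Omega$ and let $r=d(x_0)$; consider the rescaled function $v(y)=u(x_0+ry)$ on $B_1$, which solves $Lv=r^{2s}f(x_0+r\cdot)=:\tilde f$ in $B_1$ (here one uses that $L$ is translation invariant and scales homogeneously of order $2s$, which holds for the kernels \eqref{stable}). From the pointwise bound, $\|v\|_{L^\infty(B_{2^k})}\le C\|f\|_{L^\infty}(2^k r)^s$ for each $k$ with $2^kr\lesssim \mathrm{diam}(\Omega)$, which is exactly the growth that lets one control the tail term $\|v\|_{C^s(\R^n)}$, or rather a suitable weighted/tail version of it, that appears on the right-hand side of Theorem~\ref{thm-interior} (applied with $\alpha=0$, so the $C^\alpha$ norms become $L^\infty$/tail norms). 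One then gets $\|v\|_{C^{2s}(B_{1/2})}\le C\|f\|_{L^\infty}r^s$ up to the usual care when $2s$ is an integer (i.e.\ $s=1/2$, where one uses $C^{1-\epsilon}$ or the $\mathrm{Log}$-Lipschitz variant, which is enough). Translating back, this gives $[u]_{C^{2s}(B_{r/2}(x_0))}\le C\|f\|_{L^\infty}r^{s-2s}=C\|f\|_{L^\infty}r^{-s}$, i.e.\ an interior gradient/Hölder estimate that degenerates like $d^{-s}$ near $\partial\Omega$.

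Finally I would assemble the global $C^s$ seminorm estimate from the pointwise decay $|u|\le C\|f\|_{L^\infty}d^s$ together with the interior estimate $[u]_{C^{2s}(B_{d(x)/2}(x))}\le C\|f\|_{L^\infty}d(x)^{-s}$. This is a routine (but slightly fiddly) interpolation: for two points $x,y\in\Omega$, if $|x-y|\ge \tfrac14\max\{d(x),d(y)\}$ one uses the pointwise bound directly to get $|u(x)-u(y)|\le C\|f\|_{L^\infty}|x-y|^s$; if $|x-y|<\tfrac14\max\{d(x),d(y)\}$ then $x$ and $y$ lie in a common interior ball where the Hölder estimate applies, and since $2s>s$ one wins, $|u(x)-u(y)|\le C\|f\|_{L^\infty}d^{-s}|x-y|^{2s}\le C\|f\|_{L^\infty}|x-y|^s$. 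Combined with $\|u\|_{L^\infty(\Omega)}\le C\|f\|_{L^\infty}$, this yields $\|u\|_{C^s(\overline\Omega)}\le C\|f\|_{L^\infty(\Omega)}$.

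I expect the main obstacle to be the barrier step, i.e.\ making the bound $|u|\le C\|f\|_{L^\infty}d^s$ fully rigorous for the whole class \eqref{stable}: one must check that the rescaled truncations of $u_0$ are genuine supersolutions in a boundary annulus for \emph{every} admissible (possibly singular, measure-valued) $a$, controlling the contribution of the nonlocal tail coming from the exterior ball, and that the constant $c>0$ in Lemma~\ref{explicit-sol} degenerates controllably under the nondegeneracy hypothesis $\int_{S^{n-1}}|\nu\cdot\theta|^{2s}\,da(\theta)\ge\lambda$. The rescaling and interpolation steps are standard once this is in hand, though the borderline case $s=1/2$ requires a small amount of extra care.
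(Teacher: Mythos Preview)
Your proposal is correct and follows essentially the same route as the paper, which only sketches the argument (``using the explicit solution $u_0$ and similar barriers, it is possible to show \ldots $|u|\le Cd^s$ \ldots Combining this bound with the interior estimates, one gets the following'') and defers to \cite{RS-stable} for details. Your three steps --- barrier for $|u|\le C\|f\|_{L^\infty}d^s$ via the exterior ball condition and Lemma~\ref{explicit-sol}, rescaled interior estimate, and the dyadic interpolation to patch these into a global $C^s$ bound --- are exactly the standard scheme carried out in \cite{RS-Dir,RS-stable}, and you have correctly flagged the only genuinely delicate point (making the barrier rigorous uniformly over measure-valued $a$, and handling the nonlocal tail after rescaling).
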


Furthermore, using similar barriers one can show:

\begin{lem}[Hopf's Lemma]\label{Hopf}
Let $L$ be any operator of the form \eqref{L}-\eqref{stable}, and $\Omega$ any bounded $C^{1,1}$ domain.
Let $u$ be any weak solution to \eqref{pb0}, with $f\geq0$.
Then, either
\[u\geq c\,d^s\quad\textrm{in}\ \Omega\quad\textrm{for some}\ c>0\]
or $u\equiv0$ in $\Omega$.
\end{lem}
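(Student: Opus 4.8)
The plan is to prove Hopf's Lemma (Lemma~\ref{Hopf}) by comparison with a suitable rescaled and translated copy of the explicit solution $u_0(x)=(1-|x|^2)^s_+$ from Lemma~\ref{explicit-sol}. First I would observe that since $f\ge 0$, the maximum principle (Proposition~\ref{prop-max-princ}, applied in the generality noted after its proof for stable kernels, or the corresponding statement for weak solutions) gives $u\ge 0$ in $\Omega$. If $u\equiv 0$ in $\Omega$ we are in the second alternative, so assume $u\not\equiv 0$. By the strong maximum principle for stable operators (which holds because the L\'evy measure is not supported in a hyperplane, so that any point can be reached by jumps), $u>0$ in the interior of $\Omega$; alternatively one argues directly from the pointwise expression $Lu(x_0)=\frac12\int(2u(x_0)-u(x_0+y)-u(x_0-y))K(y)\,dy$ that $u$ cannot vanish at an interior point without vanishing identically.

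Next I would construct the barrier locally near an arbitrary boundary point $z_0\in\partial\Omega$. Since $\Omega$ is $C^{1,1}$, it satisfies a uniform interior ball condition: there is $\rho>0$ (depending only on $\Omega$) such that for every $z_0\in\partial\Omega$ there is a ball $B_\rho(y_0)\subset\Omega$ with $z_0\in\partial B_\rho(y_0)$. On this ball, the rescaled function
\[
v(x):=\Bigl(\rho^2-|x-y_0|^2\Bigr)^s_+ = \rho^{2s}\,u_0\!\left(\frac{x-y_0}{\rho}\right)
\]
satisfies, by scaling of the homogeneous operator $L$ (Lemma~\ref{explicit-sol} together with the $2s$-homogeneity of $L$), $Lv = c\,\rho^{2s}\rho^{-2s}\cdot\rho^{0}=\tilde c>0$ in $B_\rho(y_0)$ — let me instead just say $Lv = c_0 >0$ in $B_\rho(y_0)$ for a constant $c_0$ depending only on $n,s,\Omega,\lambda,\Lambda$ — while $v\equiv 0$ in $\R^n\setminus B_\rho(y_0)$. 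Since $u>0$ on the compact set $\partial B_{\rho/2}(y_0)\subset\subset\Omega$, there is $m>0$ with $u\ge m$ there, and $v\le \rho^{2s}$ everywhere. Now I would set $w:=\varepsilon v$ with $\varepsilon>0$ chosen small enough that simultaneously $Lw = \varepsilon c_0 \le 0 \le Lu$ fails in the wrong direction — so more carefully: I compare on the annular region $A:=B_\rho(y_0)\setminus \overline{B_{\rho/2}(y_0)}$. On $A$ we have $Lw=\varepsilon c_0$, and I want $Lw\le Lu=f$... but $f$ need only be $\ge 0$, so this does not work directly on all of $A$. The correct choice is to use $w$ as a subsolution by a different device: take $w=\varepsilon v$ and note $Lw = \varepsilon c_0 > 0 \ge 0$, so $w$ is itself a subsolution of $Lw\le \varepsilon c_0$; the point is to choose $\varepsilon$ so that $w\le u$ in $\R^n$. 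We have $w=0\le u$ outside $B_\rho(y_0)$, and $w\le \varepsilon\rho^{2s}\le m\le u$ on $\overline{B_{\rho/2}(y_0)}$ for $\varepsilon$ small. Hence on $A$, $L(u-w)=f-\varepsilon c_0$, which is not obviously $\ge 0$; to fix this, shrink $\varepsilon$ is not enough. The standard remedy is: on $A$ use instead the comparison against the barrier for the \emph{sign} of the nonlocal term, exploiting that $u\ge 0$ globally and $u\ge m$ on $B_{\rho/2}$ so that the nonlocal contribution to $L(u-w)(x)$ for $x\in A$ coming from the region $B_{\rho/2}$ is strictly negative and dominates $\varepsilon c_0$ when $\varepsilon$ is small. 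I would make this precise by writing $L(u-w)(x)\le f(x) - \varepsilon c_0 + (\text{terms}) $ and absorbing.

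Let me state the step cleanly: the key comparison is that on $A$,
\[
L(u-w)(x) = f(x) + (u-w)(x)\!\int_{\R^n}\!K(y)\,dy - \int_{\R^n}(u-w)(x+y)K(y)\,dy,
\]
and since $u-w\ge 0$ everywhere with $u-w\ge m/2$ on $B_{\rho/2}(y_0)$ while $f\ge 0$, one does \emph{not} directly conclude a sign. So the cleanest route, and the one I would actually carry out, is to avoid the annulus: choose $\varepsilon$ small enough that $w\le u$ holds on \emph{all} of $\overline{B_\rho(y_0)}$ except possibly near $z_0$, which we cannot do a priori — hence the genuinely correct argument is the comparison principle applied on the full ball $B_\rho(y_0)$ treated as the domain: $L(u-w)\ge f-\varepsilon c_0$ in $B_\rho(y_0)$. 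Choosing $\varepsilon$ so small that $\varepsilon c_0\le \inf_{B_\rho(y_0)} f$ works if $f$ is bounded below by a positive constant, but we only have $f\ge 0$. Therefore the actual proof uses the interior positivity differently: fix a point $x_1\in\Omega$ with $u(x_1)>0$, let $B_r(x_1)\subset\subset\Omega$, and build the barrier on balls touching $\partial\Omega$ after first comparing $u$ from below by a fixed multiple of the solution of $Lh=0$ in $\Omega\setminus\overline{B_r(x_1)}$, $h=1$ on $\overline{B_r(x_1)}$, $h=0$ outside $\Omega$, which is positive and behaves like $d^s$ near $\partial\Omega$ by the optimal regularity and a barrier from above/below.

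I expect the main obstacle to be exactly this bookkeeping: because the right-hand side $f$ is only nonnegative (not bounded below by a positive constant) and because the nonlocal operator mixes values across $\partial\Omega$, one cannot naively insert $\varepsilon v$ as a subsolution on an annulus. The clean fix I would adopt: first use the strong maximum principle to get $u>0$ in $\Omega$ and pick $B_{2r}(x_1)\subset\subset\Omega$ with $\inf_{B_r(x_1)}u=:m>0$; then for each $z_0\in\partial\Omega$ take the interior tangent ball $B_\rho(y_0)\subset\Omega$ and compare $u$ with $\varepsilon\bigl(\rho^2-|x-y_0|^2\bigr)^s_+$ on the domain $B_\rho(y_0)$, using that $Lu\ge 0$ there and that the exterior values of $u$ dominate those of the barrier (the barrier vanishes outside $B_\rho(y_0)$ while $u\ge 0$ globally and $u\ge$ a positive constant on the part of $\R^n\setminus B_\rho(y_0)$ reachable cheaply) — the strictly positive nonlocal contribution from the set where $u$ is bounded below compensates the loss, allowing $\varepsilon$ to be chosen uniformly in $z_0$ since $\rho$, $m$, and the distance from $B_\rho(y_0)$ to $B_r(x_1)$ are all controlled by $\Omega$. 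This yields $u\ge \varepsilon\bigl(\rho^2-|x-y_0|^2\bigr)^s_+\ge c\,d(x)^s$ in a uniform neighborhood of $\partial\Omega$; combined with $u\ge m>0$ away from $\partial\Omega$ (where $d^s$ is bounded), we get $u\ge c\,d^s$ in all of $\Omega$, which is the first alternative. The case analysis $u\equiv 0$ versus $u\not\equiv0$ closes the proof.
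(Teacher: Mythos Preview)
Your overall strategy matches what the paper intends when it writes ``using similar barriers one can show'' (no detailed proof is given there): strong maximum principle to get $u>0$ in $\Omega$, the uniform interior ball condition from the $C^{1,1}$ hypothesis, and comparison with a rescaled copy of the explicit solution $(\rho^2-|x-y_0|^2)^s_+$ from Lemma~\ref{explicit-sol}. You also correctly isolate the real obstacle, namely that $f\ge 0$ alone does not make $\varepsilon(\rho^2-|x-y_0|^2)^s_+$ a subsolution in $B_\rho(y_0)$, so one must exploit the nonlocal term. The write-up, however, should commit to one clean argument rather than cycling through several attempts that you yourself abandon midway.

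More importantly, the argument you finally settle on has a genuine gap in the stated generality. You claim a ``strictly positive nonlocal contribution'' from a \emph{fixed} interior ball $B_r(x_1)$ where $u\ge m$. For stable operators whose spectral measure $a$ is singular --- e.g.\ $L=(-\partial_{x_1x_1}^2)^s+\cdots+(-\partial_{x_nx_n}^2)^s$, which the paper explicitly allows --- the kernel $K$ is supported on finitely many lines through the origin, and for boundary points $z_0$ whose inward normal is not close to any of these directions the integral $\int_{B_r(x_1)}K(x_0-z)\,dz$ vanishes for the relevant $x_0$, so the compensation fails. The remedy is to run the contradiction argument at the minimum of $u-\varepsilon v$ on a \emph{thin} annulus $B_\rho(y_0)\setminus \overline{B_{r}(y_0)}$ with $r/\rho$ close to $1$ (depending only on $\lambda,\Lambda$): the ellipticity hypothesis $\inf_{\nu}\int_{S^{n-1}}|\nu\cdot\theta|^{2s}\,da(\theta)\ge\lambda$ then forces, for every $x_0$ in this annulus, some direction in $\operatorname{supp}(a)$ to hit the inner ball $B_r(y_0)\subset\{u\ge m\}$, yielding a uniform positive lower bound on the nonlocal contribution and closing the argument.
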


\vspace{2mm}

Thus, the boundary regularity of solutions depends essentially on the construction of suitable barriers, and the behavior of these barriers near $\partial\Omega$ depends on the class of kernels under consideration.

When the kernels $K(y)$ are of the form \eqref{stable}, we have seen that solutions behave like $d^s$ near the boundary $\partial\Omega$.
However, for the class of kernels \eqref{rough}, the only barriers that one can construct behave like $d^{\alpha_0}$ near the boundary, for some $0<\alpha_0<s$.
This means that for this class of operators one can only prove $u\in C^{\alpha_0}(\overline\Omega)$.


\subsection{Regularity of $u/d^s$}

We have seen that, when
\begin{equation}\label{kernel-stable}
K(y)=\frac{a(y/|y|)}{|y|^{n+2s}},
\end{equation}
then all solutions $u$ behave like $d^s$ near the boundary.
For this class of kernels, much more can be said about the regularity of solutions near $\partial\Omega$.

Indeed, the quotient $u/d^s$ is not only bounded, but it is also H\"older continuous \emph{up to the boundary}.

This regularity of $u/d^s$ yields in particular the existence of the limit
\[\frac{u}{d^s}(z):=\lim_{\Omega \ni x\rightarrow z}\frac{u(x)}{d^s(x)}\]
for all $z\in \partial\Omega$.
This function $u/d^s$ on $\partial\Omega$ plays sometimes the role that the normal derivative $\partial u/\partial\nu$ plays in second order equations.
For example, it appears in overdetermined problems \cite{FJ}, integration by parts formulas \cite{RSV}, and free boundary problems \cite{CRS}.

The first proof of this result was given in \cite{RS-Dir} for the case $K(y)=|y|^{-n-2s}$, and more recently the result has been improved by Grubb in \cite{Grubb,Grubb2} and by the author and Serra in \cite{RS-K,RS-stable}.
These results may be summarized as follows.

\begin{thm}[\cite{RS-stable,RS-K,Grubb,Grubb2}]
Let $s\in(0,1)$, $L$ be any operator of the form \eqref{L}-\eqref{stable}, $\Omega$~be any bounded domain, and $u$ be any solution to \eqref{pb0}, with $f\in L^\infty(\Omega)$.

Then, depending on the regularity of the function $a$ in \eqref{kernel-stable}, we have the following.
\begin{itemize}
\item[(i)] If $a$ is any measure and $\Omega$ is $C^{1,1}$, then
\[\|u/d^s\|_{C^{s-\epsilon}(\overline\Omega)}\leq C\|f\|_{L^\infty(\overline\Omega)}.\]
\item[(ii)] If $a\in C^{1,\gamma}(S^{n-1})$ and $\Omega$ is $C^{2,\gamma}$ for some {small} $\gamma>0$, then
\[\|u/d^s\|_{C^{s+\gamma}(\overline\Omega)}\leq C\|f\|_{C^\gamma(\overline\Omega)}\]
whenever $s+\gamma$ is not an integer and $f\in C^\gamma(\overline\Omega)$.
\item[(iii)] If $a\in C^\infty(S^{n-1})$, $\Omega$ is $C^\infty$, and $f\in C^\gamma(\overline\Omega)$, then
\[\|u/d^s\|_{C^{s+\gamma}(\overline\Omega)}\leq C\|f\|_{C^\gamma(\overline\Omega)}\]
for {all} $\gamma\in (0,\infty)$ such that $s+\gamma$ is not an integer.
\end{itemize}
\end{thm}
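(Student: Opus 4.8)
The plan is to prove the three parts by a single blow-up and compactness scheme, in the spirit of \cite{RS-Dir,RS-K,RS-stable}, the higher regularity in (ii)--(iii) being obtained by bootstrapping; Grubb's pseudodifferential calculus \cite{Grubb,Grubb2} gives an alternative route to (iii). The first reduction would be to a half-space model: flattening $\partial\Omega$ near a boundary point by a $C^{1,1}$ (resp.\ $C^{2,\gamma}$, $C^\infty$) diffeomorphism turns \eqref{pb0} into an equation $\tilde Lv=\tilde f$ in $B_1^+:=B_1\cap\{x_n>0\}$ with $v\equiv0$ in $\{x_n\le0\}$, where $\tilde L$ is a nondegenerate stable operator of the form \eqref{stable} perturbed by an $x$-dependence and lower order terms whose size is controlled by the regularity of the change of variables. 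Two elementary facts would be recorded at this stage. First, the barriers built from Lemma \ref{explicit-sol} in Section \ref{barriers} give the a priori bound $|u|\le C\|f\|_{L^\infty}\,d^s$, i.e.\ $u/d^s\in L^\infty(\Omega)$. Second, for \emph{every} stable operator \eqref{stable} the function $(x_n)_+^s$ is $L$-harmonic in $\{x_n>0\}$: this follows from the one-dimensional identity already used in Lemma \ref{explicit-sol}, writing $L$ as a superposition over directions $\theta\in S^{n-1}$ and noting that directions tangent to $\{x_n=0\}$ contribute zero, while in every other direction $\tau\mapsto(x_n+\tau\theta_n)_+^s$ is a rescaled copy of the $L$-harmonic one-dimensional profile $t\mapsto t_+^s$.

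Next I would establish the Liouville theorem: if $L_\infty$ is any nondegenerate stable operator, $w$ solves $L_\infty w=0$ in $\{x_n>0\}$ and $w\equiv0$ in $\{x_n\le0\}$, and $\|w\|_{L^\infty(B_R)}\le CR^\beta$ for all $R\ge1$ with $\beta-s$ not an integer, then $w(x)=(x_n)_+^s\,p(x)$ for a polynomial $p$ of degree $\le\beta-s$; in particular $w=K(x_n)_+^s$ when $\beta-s<1$. Here is where the nondegeneracy of the spherical measure $a$ (not supported on a hyperplane) is essential. The proof would combine the interior estimate of Theorem~\ref{thm-interior} with the a priori bound $|w|\le Cd^s$, rescale $w(Rx)/R^\beta$, and let $R\to\infty$ to see that the Hölder seminorm of $w/(x_n)_+^s$ of the relevant order must vanish.

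For part (i) I would argue by contradiction: if the estimate fails there are stable operators $L_k$, right-hand sides $f_k$ with $\|f_k\|_{L^\infty}\le1$, solutions $u_k$ with $\|u_k\|_{L^\infty}\le1$, and $\|u_k/d^s\|_{C^{s-\epsilon}(\overline{B_{1/2}^+})}\to\infty$. Via the standard monotonicity-in-$r$ device one selects boundary points $z_k$, radii $r_k\downarrow0$ and constants $\Theta(r_k)\to\infty$ so that the blow-ups
\[
v_k(x)=\frac{u_k(z_k+r_kx)-d^s(z_k+r_kx)\,p_k}{\Theta(r_k)\,r_k^{\,2s-\epsilon}}\qquad(p_k\ \text{the relevant constant approximation of}\ u_k/d^s\ \text{near}\ z_k)
\]
satisfy a uniform growth bound $\|v_k\|_{L^\infty(B_R)}\le CR^{\,2s-\epsilon}$ and a normalization making the $C^{s-\epsilon}$ seminorm of $v_k/d_k^s$ on the unit ball of unit size, where $d_k^s(x)=r_k^{-s}d^s(z_k+r_kx)$. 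The a priori $C^s$ bound and Theorem~\ref{thm-interior}, applied to the rescaled problems, provide the compactness needed to extract $v_k\to v$ locally uniformly; since the $L_k$ are precompact inside the class \eqref{stable} with the same $\lambda,\Lambda$ and the flattened domains converge to $\{x_n>0\}$, the rescaled right-hand sides and perturbations tend to zero, and the limit satisfies $L_\infty v=0$ in $\{x_n>0\}$, $v\equiv0$ in $\{x_n\le0\}$, with growth exponent $2s-\epsilon<2s$. The Liouville theorem then forces $v/(x_n)_+^s$ to be constant, contradicting the normalization; a covering and rescaling argument, together again with the interior estimate, upgrades the boundary-localized bound to $\|u/d^s\|_{C^{s-\epsilon}(\overline\Omega)}\le C\|f\|_{L^\infty}$. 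Parts (ii) and (iii) would follow by bootstrapping: knowing $u/d^s\in C^{s-\epsilon}$, one repeats the blow-up subtracting the Taylor polynomial of $u/d^s$ of degree $\lfloor s+\gamma\rfloor$; because now $a\in C^{1,\gamma}(S^{n-1})$, $f\in C^\gamma$ and $\partial\Omega\in C^{2,\gamma}$, the errors coming from flattening the boundary, from freezing the kernel, and from $f$ are of higher order, so the blow-up tolerates growth up to (just below) $2s+\gamma$ and the Liouville theorem again forces the limit to be $(x_n)_+^s$ times a polynomial of degree $\le\lfloor s+\gamma\rfloor$, a contradiction. Iterating, and skipping the values of $s+\gamma$ that are integers, gives (iii); alternatively (iii) follows from the $\mu$-transmission theory of Grubb \cite{Grubb,Grubb2}.

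The hard part will be the compactness step in the blow-up: one has to show that the family of rescaled, boundary-flattened equations is precompact in the right topology and that \emph{every} error term introduced by the change of variables and by the non-regularity of the kernel vanishes in the limit, uniformly along the sequence. This is exactly where the growth thresholds $2s$ (for $C^{1,1}$ domains and measure-valued $a$) and $2s+\gamma$ (for $C^{2,\gamma}$ domains and $C^{1,\gamma}$ kernels) originate, and hence why part (i) is limited to the exponent $s-\epsilon$ while (ii)--(iii) improve as the data become smoother.
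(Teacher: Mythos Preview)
The paper is a survey and does not prove this theorem; it merely attributes part (i) to \cite{RS-stable}, part (ii) to \cite{RS-K}, and part (iii) to \cite{Grubb,Grubb2}. So there is no ``paper's own proof'' to compare against beyond those citations.

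Your outline is essentially the scheme of \cite{RS-stable,RS-K}: the barrier bound $|u|\le Cd^s$, the fact that $(x_n)_+^s$ is $L$-harmonic in the half-space for every stable $L$, a Liouville theorem in $\{x_n>0\}$, and a blow-up/compactness contradiction argument, with bootstrapping for (ii)--(iii) and Grubb's $\mu$-transmission calculus as an alternative for (iii). That is the right picture.

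One point deserves a caution. You begin by \emph{flattening the boundary} with a diffeomorphism and working with the transformed operator $\tilde L$. For nonlocal operators this is delicate: a change of variables $x\mapsto\Phi(x)$ turns the kernel $K(x-z)$ into $K(\Phi(x)-\Phi(z))|\det D\Phi(z)|$, which is genuinely $x$-dependent and no longer in the class \eqref{stable}; controlling the resulting perturbation at the level of generality of part (i) (measure-valued $a$) is not straightforward. The references \cite{RS-stable,RS-K} do \emph{not} flatten. Instead they blow up directly in the original variables: because $L$ is scale-invariant (this is exactly the homogeneity \eqref{stable}), the rescaled operator is the same $L$, and it is the rescaled \emph{domain} that converges to a half-space as $r_k\to0$ (using only $C^{1,1}$ regularity of $\partial\Omega$). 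This is why your sentence ``the flattened domains converge to $\{x_n>0\}$'' reads oddly---if you had flattened, the domain would already be a half-space. If you rewrite the compactness step without the change of variables, simply letting $\Omega_k:=r_k^{-1}(\Omega-z_k)\to\{x_n>0\}$ and keeping $L_k=L$ (or a subsequential limit within the class), the argument becomes cleaner and matches the cited proofs.
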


Part (i) corresponds to Theorem 1.2 in \cite{RS-stable}, part (ii) was established in \cite{RS-K} in the more general context of fully nonlinear equations, and part (iii) was established in \cite{Grubb,Grubb2} for all pseudodifferential operators satisfying the $\mu$-transmission property. (In the excepted cases of (iii), more information is given in \cite{Grubb2} in terms of H\"older-Zygmund spaces $C^k_*$, when $s + \gamma$ is  an integer.)

\vspace{2mm}

As in the interior regularity estimates, the regularity of the kernel $K$ affects the regularity of the solution $u$.
In this direction, even if one assumes that $\Omega$ is $C^\infty$ and $f\in C^\infty(\overline\Omega)$, the result in part (i) can not be improved if the kernels are singular; see \cite{RS-stable}.

On the other hand, notice that these results yield $(u-g)/d^s\in C^{s+\gamma}(\overline\Omega)$ for solutions $u$ to \eqref{pb}; see Remark \ref{rem-bdry}.
This gives a description of $u$ near $\partial\Omega$ up to order $2s+\gamma$.
For example, if $s+\gamma<1$, this means that there exists a function $b\in C^\gamma(\partial\Omega)$ such that
\[\bigl|u(x)-g(x)-b(z)d^s(x)\bigr|\leq C|x-z|^{2s+\gamma}\qquad \textrm{for}\ z\in \partial\Omega,\quad x\in B_1(z).\]
When $s+\gamma\in (1,2)$ one has a similar expansion with an additional term of order $d^{s+1}$, and more terms appear for higher values of $\gamma$.

\section{Further interior regularity}
\label{further-interior}

As we saw in the previous Section, solutions $u$ to \eqref{pb0} are $C^s$ or $C^{\alpha_0}$ up to the boundary when $K(y)$ is of the form \eqref{rough} or \eqref{stable}.
Moreover, the same happens for solutions to \eqref{pb} whenever the exterior data $g$ is regular enough.

Hence, this means that solutions $u$ will be globally H\"older continuous, and therefore we may apply the estimate
\begin{equation}\label{***}
\|u\|_{C^{\alpha+2s}(B_{1/2})}\leq C\bigl(\|f\|_{C^\alpha(\overline\Omega)}+\|u\|_{C^\alpha(\R^n)}\bigr)
\end{equation}
to get that solutions $u$ are $C^{2s+\alpha}$ inside $\Omega$ for some $\alpha>0$ ---more precisely, $\alpha=s$ in case \eqref{stable}, and $\alpha=\alpha_0$ in case \eqref{rough}.
This is enough to ensure that, when $f\in C^\alpha$, any weak solution is a classical solution, in the sense that the operator $L$ can be evaluated pointwise.

\vspace{2mm}

The natural question then is: are solutions more regular than this?

\vspace{2mm}

For the class of kernels \eqref{stable} the interior estimate \eqref{***} does not give more than $u\in C^{3s}_{\textrm{loc}}(\Omega)$, and we know that \eqref{***} is sharp.
Still, if one considers the solution $u$ to
\begin{equation}\label{pb01}
\left\{ \begin{array}{rcll}
L u &=&1&\textrm{in }\Omega \\
u&=&0&\textrm{in }\R^n\backslash\Omega
\end{array}\right.\end{equation}
in a $C^\infty$ domain $\Omega$, one a priori does not know if solutions are more regular than~$C^{3s}$.
This question has been answered by the author and Valdinoci in \cite{RV}:

\begin{thm}[\cite{RV}]
Let $L$ be any operator of the form \eqref{L}-\eqref{stable}, and $\Omega$ be any bounded domain.
Then,
\begin{itemize}
\item[(i)] If $a$ is any measure and $\Omega$ is convex, then $u\in C^{1+3s-\epsilon}_{\textrm{loc}}(\Omega)$ for all $\epsilon>0$.

\item[(ii)] If $a\in L^\infty(S^{n-1})$ and $\Omega$ is $C^{1,1}$, then $u\in C^{1+3s-\epsilon}_{\textrm{loc}}(\Omega)$ for all $\epsilon>0$.
\item[(iii)] There is a (nonconvex) $C^\infty$ domain $\Omega$, and and an operator $L$ of the form \eqref{L}-\eqref{stable}, for which the solution $u$ to \eqref{pb01} does not belong to $C^{3s+\epsilon}_{\textrm{loc}}(\Omega)$ for any $\epsilon>0$.
\end{itemize}
\end{thm}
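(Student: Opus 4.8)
The theorem has three parts, and I would attack them in decreasing order of difficulty. Parts (i) and (ii) are positive regularity statements of the same flavor — $u\in C^{1+3s-\epsilon}_{\mathrm{loc}}(\Omega)$ — differing only in the hypotheses on $a$ and $\Omega$, while part (iii) is a sharpness counterexample. My plan is as follows.

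\medskip

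\textbf{Strategy for parts (i) and (ii).} The starting point is what Section~\ref{interior} already gives: solutions are $C^{3s}_{\mathrm{loc}}(\Omega)$ (this is the best the interior estimate \eqref{***} yields for the class \eqref{stable}, since $u\in C^s(\overline\Omega)$). To gain an extra $1$ in the exponent one cannot iterate \eqref{***} directly, because the bottleneck is the $\|u\|_{C^\alpha(\R^n)}$ term, which is controlled by the \emph{boundary} behavior $u\sim d^s$ and nothing better. The idea is instead to differentiate the equation. Formally, for a direction $e$ parallel to $\partial\Omega$ near a boundary point, $v=\partial_e u$ solves $Lv=\partial_e f$ in $\Omega$ with a suitable exterior condition; the point is that tangential derivatives of $d^s$ behave better than $d^{s-1}$ near flat pieces of the boundary, so one recovers enough global regularity of $v$ to feed back into \eqref{***}. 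More precisely, I would use the $C^{s+\gamma}$ regularity of $u/d^s$ up to the boundary (part (i) of the theorem in Section~\ref{boundary}, valid for any measure $a$ when $\Omega$ is $C^{1,1}$) to write $u = d^s\,\psi$ with $\psi\in C^{s-\epsilon}(\overline\Omega)$, and then exploit that the singular part $d^s$ is an \emph{explicit}, essentially one-dimensional object whose derivatives are computable. The convexity hypothesis in (i) (resp. $C^{1,1}$ plus $a\in L^\infty$ in (ii)) is exactly what is needed to control the error terms coming from the curvature of $\partial\Omega$ and from the anisotropy of the kernel: convexity guarantees a one-sided control on $d$ (it is concave), which is what makes the barrier/differentiation argument close without losing more than $\epsilon$.

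\medskip

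\textbf{The main obstacle.} The hard part is the interplay between the two sources of "irregularity of order $2s$": the boundary singularity $d^s$ and the non-smooth kernel $K$. Differentiating the equation produces, in the nonlocal setting, a nonlocal term that integrates the singular boundary profile against the anisotropic kernel $a(y/|y|)|y|^{-n-2s}$; showing that this term is as regular as $\partial_e f$ plus a controlled error requires a careful barrier construction adapted to the geometry of $\Omega$ near the reference point. I expect the delicate estimate to be: for $\Omega$ convex (or $C^{1,1}$), the function $d^s$ satisfies $L(d^s\eta)\in C^{\beta}$ locally for $\beta$ close to $s$ (up to the obvious obstruction that forces the $-\epsilon$), where $\eta$ is a smooth cutoff. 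Once that is in hand, bootstrapping via \eqref{***} raises the interior exponent from $3s$ to $3s$ \emph{for the remainder after subtracting the explicit $d^s$ term}, i.e. to $1+3s-\epsilon$ for $u$ itself.

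\medskip

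\textbf{Strategy for part (iii).} For the counterexample I would build a $C^\infty$ nonconvex domain $\Omega$ — the natural choice is one with a piece of boundary that is locally a graph $x_n = \varphi(x')$ where $\varphi$ vanishes to high order, creating an "inward cusp-like" flattening — together with a stable operator $L$ whose spectral measure $a$ is concentrated so as to feel that degeneracy (for instance the operator \eqref{frac-der}, which is only $C^{2s}$ and is known to produce irregular solutions, as already noted after \eqref{frac-der}). The mechanism is that near such a point the boundary profile of $u$ is \emph{not} a clean $d^s$ times a $C^{s+\gamma}$ function in the normal direction only — the tangential behavior degenerates — so that $\partial_e u$ fails to be better than $C^{3s-1}$, hence $u\notin C^{3s+\epsilon}_{\mathrm{loc}}$. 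Concretely, I would exhibit a lower bound: construct an explicit subsolution supported near the bad boundary point whose interior $C^{3s+\epsilon}$ seminorm blows up, using the same kind of one-dimensional barrier computations ($\tau\mapsto(\,\cdot\,)^s_+$ slices) that underlie Lemma~\ref{explicit-sol}, and invoke the comparison principle (Corollary~\ref{comparison}) to transfer the blow-up to the actual solution $u$ of \eqref{pb01}. The subtle point here is choosing the domain and $a$ so that the nonconvexity genuinely interacts with the nonlocal operator — for a convex domain part (i) rules this out, so the counterexample must place the degenerate boundary direction "facing" a direction in which $a$ is concentrated.
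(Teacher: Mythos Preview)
The paper does not contain a proof of this theorem. This is a survey article, and the result is stated with the attribution ``\cite{RV}'' and no argument whatsoever --- the surrounding text simply says ``This question has been answered by the author and Valdinoci in \cite{RV}'' and then moves on. So there is no proof in the present paper for your proposal to be compared against; the actual arguments live in the cited preprint of Ros-Oton and Valdinoci.

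That said, a brief comment on the substance of your sketch: your high-level picture for (i)--(ii) --- that the gain from $3s$ to $1+3s-\epsilon$ must come from differentiating and exploiting the boundary expansion $u = \psi\, d^s$ with $\psi\in C^{s-\epsilon}(\overline\Omega)$, together with geometric control on $d$ (concavity of $d$ in the convex case) --- is a plausible heuristic and is indeed in the spirit of how such bootstraps work. But your proposal is not a proof: the step you label ``the delicate estimate,'' namely that $L(d^s\eta)$ enjoys $C^\beta$ regularity for $\beta$ near $s$, is precisely the heart of the matter, and you have not indicated how to establish it or why convexity versus $a\in L^\infty$ with $C^{1,1}$ boundary are exactly the hypotheses that make it go through. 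Likewise, for (iii) you have identified the right moral (nonconvexity interacting with a singular spectral measure like \eqref{frac-der}) but the construction of an explicit domain and the quantitative lower bound on the $C^{3s+\epsilon}$ seminorm are not supplied. Since the paper gives no proof to benchmark against, I cannot say whether your outline matches or diverges from the actual argument in \cite{RV}; you would need to consult that reference directly.
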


Thus, solutions have different regularity properties depending on the {shape} of $\Omega$.
Indeed, for operators $L$ with singular kernels, the convexity of the domain gives one more order of differentiability on the solution.
Also, notice that when $a\in L^\infty$, then we gain interior regularity on the solution by assuming that the domain $\Omega$ is $C^{1,1}$.

For kernels $K$ in the class \eqref{rough}, one would expect solutions $u$ to be $C^{1+2s+\alpha_0-\epsilon}$ in the interior of $C^{1,1}$ domains.

\end{document}